\documentclass[twoside,11pt]{article}

\usepackage{amsmath}
\usepackage{amssymb}
\usepackage{amsthm}
\usepackage{indentfirst}
\usepackage[hidelinks]{hyperref}
\usepackage{paralist}
\usepackage{geometry}
\usepackage{setspace,lipsum}
\usepackage[title]{appendix}
\usepackage{color}
\usepackage{multimedia}
\usepackage{multirow}
\usepackage{bbm}
\usepackage{graphics}
\usepackage{graphicx}
\usepackage{booktabs}
\usepackage{romannum}
\usepackage{grffile}
\usepackage{float}
\usepackage{url}
\usepackage{mathabx}
\usepackage{supertabular}
\usepackage{rotating}
\usepackage{pdflscape}
\usepackage{verbatim}
\usepackage{listings}
\usepackage{xcolor}
\usepackage{multicol}
\usepackage[shortlabels]{enumitem} 
\usepackage{caption,subcaption}
\usepackage{algorithm}
\usepackage{algpseudocode}
\usepackage{tikz} 
\usepackage[round]{natbib}
\usepackage{authblk}
\usepackage{etoolbox}
\graphicspath{{figure/}}
\newcommand{\bm}[1]{\boldsymbol{#1}}

\DeclareMathOperator*{\argmax}{arg\,max}
\DeclareMathOperator*{\argmin}{arg\,min}
\newtheorem{assumption}{Assumption}

\newtheorem{theorem}{Theorem}[section]
\newtheorem{lemma}[theorem]{Lemma}
\newtheorem{remark}[theorem]{Remark}

\algblock{Input}{EndInput}
\algnotext{EndInput}
\algblock{Output}{EndOutput}
\algnotext{EndOutput}

\usetikzlibrary{fit,positioning}

\begin{document}

\pagenumbering{arabic}
\providecommand{\keywords}[1]
{
\small	
\textbf{\textit{Keywords:}} #1
}

\title{Continuous-Time Mean-Variance Portfolio Selection Reinforcement Learning Problems for a Markov Regime-Switching System with No-Shorting Constraints 
}

\author{Minghui Zhang \thanks{ School of Mathematics, Southeast University, Nanjing, 211189, P. R. China. E-mail: minghuibest@163.com}
~ ~ ~ 
Xun Li \thanks{ Department of Applied Mathematics, Hong Kong Polytechnic University, Hong Kong, P. R. China. E-mail: li.xun@polyu.edu.hk}
~ ~ ~ 
Xin Zhang \thanks{Corresponding author: School of Mathematics, Southeast University, Nanjing, 211189, P. R. China. E-mail: xzhangseu@seu.edu.cn}
}
\date{}
\maketitle

\let\thefootnote\relax

\begin{abstract}
\singlespacing
This paper is the first to study the continuous-time mean–variance portfolio selection problem with no-shorting constraints in a Markov regime-switching market using reinforcement learning (RL). The main challenge stems from the interaction between random regime shifts and portfolio non-negativity constraints, which typically causes the coupled Hamilton–Jacobi–Bellman (HJB) equations to lose smoothness across different state regions. Additionally, directly imposing no-shorting constraints on randomized policies disrupts their analytically tractable Gaussian structure, complicating both exploratory modeling and reinforcement learning. To address these challenges, we propose an auxiliary exploratory approach that limits the policy to a Gaussian distribution and applies the no-shorting constraint solely to its mean. We explicitly characterize the optimal policy for this auxiliary problem and demonstrate that, by appropriately selecting the exploration variance, its optimal mean policy exactly matches the optimal policy of the original constrained mean–variance problem. Building upon this analytical framework, we develop a model-free reinforcement learning algorithm that learns the optimal portfolio policy directly from trajectory data. Numerical experiments support the theoretical results and show that the proposed method can effectively recover the optimal no-shorting portfolio in a regime-switching market.
\end{abstract}

\small{\textit{\textbf{Keywords:}}Continuous-Time RL, Mean-Variance portfolio optimization, No-Shorting, Markov regime-switching, Model-Free Policy Gradient, RL Algorithm}

\section{Introduction.}\label{section_introduction}
Mean–variance portfolio selection is one of the fundamental problems in mathematical finance. It was originally introduced by \citet{markowitz1952portfolio} and later extended to the classical continuous-time framework by \citet{merton1972analytic}. Its practical relevance and rich analytical structure have motivated extensive research on this problem; see, for example, \citet{yong1999stochastic, zhou2000continuous}, and the references therein. In the finance field, investors could face two market regimes, one of which represents a bull market with price increases, while the other regime represents a bear market with price drops. Therefore, the regime-switching type portfolio selection problem is of great interest and importance in financial investment. Meanwhile, portfolio constraints are indispensable in realistic investment problems, among which no-shorting is one of the most basic and widely used constraints. Such constraints significantly increase the complexity of the classical mean-variance problem and naturally lead to constrained stochastic control problems. 
To effectively capture macroeconomic cycles and abrupt transitions in market states, this paper considers the continuous-time mean-variance portfolio selection problem with no-shorting constraints for a Markov regime-switching system from a model-free reinforcement learning perspective. 

     Regime-switching models have been widely used in portfolio selection.  \citet{zhou2003markowitz} introduced the Markov regime-switching framework to the continuous-time mean–variance portfolio selection problem, providing a theoretical foundation for this line of research. Subsequently, later studies extended regime-switching models to more general investment decision problems; see \citet{Chen2008markowitz,Sotomayor2009explicit,Canakoglu2010portfolio}.
     Compared with models with deterministic market parameters, regime-switching substantially increases the analytical complexity of the portfolio selection problem. Because different market regimes correspond to different value functions, dynamic programming typically yields a system of HJB equations. In particular, when the risk-free interest rate is also regime-dependent, this coupling structure is further strengthened, making it difficult to obtain an analytical solution to the corresponding equations.
     To avoid this difficulty, many studies on regime-switching portfolio problems assume a deterministic risk-free interest rate; see, for example, \citet{wei2017time,wu2015nash,zhang2016mean}. To relax this assumption, \citet{chen2020continuous} let the parameters of both the risk-free and risky assets vary across regimes and used stochastic Riccati equations to address the resulting coupling, extending the regime-switching framework to a more general market setting.

     Recent research in stochastic control (\cite{dai2023learning, denkert2024control, jia2021policy, jia2021policypg, jia2023q, wang2020continuous, wang2020reinforcement, wu2024reinforcement}) has demonstrated the advantages of applying RL methods to classical stochastic control problems. In particular, \citet{wang2020reinforcement} developed an entropy-regularized exploratory control framework for continuous-time stochastic control, characterized the corresponding optimal exploratory policy, and established the asymptotic equivalence between the exploratory and classical control solutions. Based on this framework, \citet{wang2020continuous} introduced the exploratory mean-variance (EMV) portfolio selection problem and developed a corresponding policy improvement method. Subsequently, Jia and  Zhou \cite{jia2021policy, jia2021policypg, jia2023q} systematically investigated three fundamental modules of reinforcement learning—policy evaluation, policy gradient, and continuous-time q-learning—aiming to establish a rigorous theoretical foundation for model-free continuous-time RL via the martingale approach. 
     On the theoretical side, \citet{tang2022exploratory} established well-posedness, regularity, and vanishing-exploration convergence for exploratory HJB equations. Regarding algorithmic convergence, \citet{huang2025sublinear} investigated the convergence of related algorithms for a class of continuous-time linear-quadratic (LQ) control problems, while \citet{huang2025convergence} proved the convergence of policy iteration for general entropy-regularized stochastic control problems over infinite horizons. Furthermore, \citet{sethi2025entropy} studied algorithms and asymptotic properties for entropy-regularized control, analyzed continuous-time policy mirror descent and entropy annealing, and quantified convergence for stochastic exit problems. In addition, \citet{guo2026fast} proposed regularized policy gradient and iterative policy optimization methods for entropy-regularized infinite-horizon LQ control and established linear and partial superlinear convergence rates.
     Additionally, \citet{bourdais2026entropy} extended the entropy penalty approach to stochastic control problems formulated on the space of probability measures. 

     At the same time, balancing control constraints with sufficient exploration has become an important issue in continuous-time reinforcement learning. \citet{chau2026continuous} studied the continuous-time expected utility maximization problem with portfolio constraints and showed that the optimal exploration strategy transforms from a Gaussian distribution to a truncated Gaussian distribution. More closely related to our approach, \citet{dai2025data} constructed an auxiliary control problem via Gaussian policy randomization and and prove that the mean of its optimal Gaussian policy solves the original Merton problem, but this problem is unconstrained. \citet{li2026dynamic} extended this idea to the constrained setting and proved that a suitably designed auxiliary exploratory problem can preserve the analytical tractability of Gaussian policies while ensuring that the mean of the optimal policy coincides with the optimal control of the original problem.

     Recent studies have extended continuous-time reinforcement learning to Markov regime-switching environments and developed corresponding exploratory control and learning frameworks. \citet{wu2024reinforcement} formulated the regime-switching mean–variance portfolio problem as a partially observable Markov decision process and used Wonham filtering to infer the unobservable market regime, but their approach still relies on market parameters that are known or estimated in advance. Subsequently, \citet{chen2025exploratory} developed an exploratory mean–variance framework for regime-switching, based on the martingale property of the optimal value function, and used martingale orthogonality to construct a learning approach that does not explicitly require knowledge of the true market parameters. \citet{zhang2026continuous} further extended Shannon entropy to Tsallis entropy and developed corresponding continuous-time q-learning methods, while \citet{xing2026exploratory} incorporated a constant elasticity of variance model into the regime-switching reinforcement learning framework to capture more general state-dependent volatility dynamics.

     Although continuous-time reinforcement learning has made significant progress in mean–variance portfolio optimization, it still faces important theoretical challenges in extending existing methods to markets with both Markov regime-switching and no-shorting constraints. In the Markov regime-switching setting, the mean–variance problem gives rise to a system of coupled HJB equations, while the no-shorting constraint further restricts the portfolio control to the nonnegative closed convex cone ($u\geq 0$). The combination of regime switching and no-shorting constraints leads to a piecewise structure in the optimal control that depends on the state, while the corresponding value function generally no longer admits the globally smooth quadratic form characteristic of unconstrained LQ problems. This not only makes the coupled HJB system harder to solve analytically, but also makes it difficult to apply policy evaluation and Actor–Critic algorithms that rely on the value function's smoothness and explicit structure.

     Another key challenge lies in constructing the exploratory policy. In unconstrained entropy-regularized quadratic control problems, the optimal exploratory policy typically admits an analytically Gaussian form. Under the no-shorting constraint, however, a standard Gaussian policy necessarily assigns positive probability to negative controls and therefore cannot satisfy the strict requirement ($u\geq 0$). If we further restrict the support of the random strategy to the non-negative region, for example, a truncated Gaussian distribution, we can guarantee control feasibility, but this destroys the analytical convenience of Gaussian randomization and the quadratic control structure. As a result, both policy optimization and the associated HJB equations may no longer admit closed-form representations. The central challenge is therefore to preserve the analytical tractability of Gaussian exploration while respecting the nonnegative control constraint and establishing a rigorous connection between the exploratory problem and the original constrained control problem.

     The main contributions of this paper are summarized as follows:
     \begin{itemize}
        \item To the best of our knowledge, this is the first study of the continuous-time mean–variance portfolio selection problem with no-shorting constraints in a Markov regime-switching market from a reinforcement learning perspective. The main difficulty is that the nonnegative portfolio constraint destroys the Gaussian structure that makes exploratory control analytically tractable. To address this difficulty, we propose an auxiliary Gaussian exploratory formulation where the policy remains Gaussian, but only its mean is constrained to lie in the nonnegative orthant $\mathbb{R}_{+}^{m}$. This construction preserves the tractability of Gaussian exploration while ensuring that the ultimately implemented mean policy satisfies the no-shorting constraint.

        \item We establish an exact recovery result that connects the auxiliary exploratory problem to the original constrained mean-variance problem. Specifically, we show that by choosing the exploration variance appropriately, the mean of the optimal Gaussian policy in the auxiliary problem exactly matches the optimal control for the original constrained problem. Therefore, the proposed Gaussian exploration is not just a computational approximation; it is equivalent to the original constrained problem with respect to the mean of the optimal Gaussian policy. This equivalence provides strong theoretical justification for using Gaussian exploration to address strictly constrained portfolio problems.

        \item Building on the above theoretical results, together with the martingale-based policy evaluation method of \citet{jia2021policy} and the policy gradient theory of \citet{jia2021policypg}, we develop a model-free Actor–Critic framework for the constrained regime-switching problem. The proposed method requires no explicit knowledge of the market information and avoids repeatedly solving the coupled HJB equations during learning. Instead, it learns the optimal constrained portfolio directly from observed trajectories. Numerical experiments validate the theoretical results and demonstrate that the proposed method can effectively recover the optimal no-shorting portfolio in a Markov regime-switching market.
    \end{itemize}

     The remainder of this paper is organized as follows: Section \ref{sec2} formulates the continuous-time mean-variance problem with no-shorting constraints for a Markov regime-switching system and introduces the auxiliary exploratory problem. Section \ref{sec3} provides an explicit solution to the auxiliary problem and compares it with the target problem. Section \ref{sec4} derives and designs a model-free, continuous-time RL algorithm within the restricted policy space. Section \ref{sec5} validates the effectiveness of the algorithm through numerical experiments. Section \ref{sec6} concludes the paper.

\section{Problem Formulation and Preliminaries.}\label{sec2}

\subsection{Mean-Variance Problem with No-Shorting Constraints for a Markov regime-switching system.}
For a fixed time horizon $T>0$,  let $(\Omega, \mathcal{F}, \mathbb{F}, \mathbb{P})$ be a filtered probability space with the filtration $\mathbb{F}:=\{\mathcal{F}_t, 0 \leq t \leq T\}$ on which are defined a standard $m$-dimensional Brownian motion $W=\{W_t^1,\cdots, W_t^m, 0 \leq t \leq T\}$ and a continuous-time irreducible Markov chain $\alpha=\{\alpha_t, 0 \leq t \leq T\}$ in a finite state space $\mathcal{M}=\{1, 2, \ldots, L\}$ with $L>1$. We assume $W_t$ and $\alpha_t$ are independent processes.
The generator matrix (transition rate matrix) of the Markov chain is denoted by $Q = (q_{ij})_{L \times L}$, where for $i \neq j$, $q_{ij} > 0$ represents the transition rate from state $i$ to state $j$, and $q_{ii} = -\sum_{j \neq i} q_{ij}$. 
Define the filtrations $\mathcal{F}_t=\sigma\{W_s, \alpha_s, 0 \leq s \leq t\} \bigvee \mathcal{N}$ and $\mathcal{F}_t^W=\sigma\{W_s, 0 \leq s \leq t\} \bigvee \mathcal{N}$, where $\mathcal{N}$ is the totality of all the $\mathbb{P}$-null sets of $\mathcal{F}$.

{\it Notation}. We use the following notation throughout the paper:
\begin{itemize}
\item $\mathcal{P}(\mathbb{R}^m)$ : the set of density functions on $\mathbb{R}^m$;
\item $\mathcal{P}(\mathbb{R}_{+}^m)$ : the set of density functions on $\mathbb{R}_{+}^m$;
\item $\mathcal{P}_{+}(\mathbb{R}^m)$ : the subset of $\mathcal{P}(\mathbb{R}^m)$ consisting of distributions with nonnegative mean;
\item $\mathbb{S}_{++}^m$ is the space of $m \times m$ symmetric positive definite matrices;
\item $L_{\mathcal{F}}^2(0, T ; \mathbb{R})=\bigg\{\varphi:[0, T] \times \Omega \rightarrow \mathbb{R} \mid \varphi(\cdot)$ is an $\{\mathcal{F}_t\}_{t \geq 0}$-adapted process with the norm $\|\varphi\|=\big(\mathbb{E} \int_0^T|\varphi(t)|^2 d t\big)^{\frac{1}{2}}<\infty\bigg\}$;
\item $L_{\mathcal{F}}^{\infty}(0, T ; \mathbb{R})=\bigg\{\varphi:[0, T] \times \Omega \rightarrow \mathbb{R} \mid \varphi(\cdot)$ is an $\left\{\mathcal{F}_t\right\}_{t \geq 0}$-adapted essentially bounded process\bigg\}.
\end{itemize}
    
Consider a financial market consisting of one risk-free asset that satisfies
$$
d S_t^0=S_t^0 r\left(t, \alpha_t\right) d t, \quad t \in[0, T], S_0^0=s_0>0,
$$
and $m$ risky assets whose prices are governed by geometric Brownian motions 
$$
d S_t^k=S_t^k b^k\left(t, \alpha_t\right) d t+S_t^k \sum_{l=1}^m \sigma^{k l}\left(t, \alpha_t\right) d W_t^l, \quad t \in[0, T], S_0^k=s_k>0, \quad k \in\{1,2, \ldots, m\}.
$$ 
where, for every $k=1, \ldots, m$ and $i \in \mathcal{M}$, $r(t, i)$ is the interest rate process, $b^k(t, i)$ and $\sigma^k(t, i):=\left(\sigma^{k 1}(t, i), \ldots, \sigma^{k n}(t, i)\right)^\top$ are the appreciation rate process and volatility rate process of the $k$-th risky security corresponding to a market regime $\alpha_t=i$.

Define the appreciation vector $b(t, i)=(b_1(t, i), \ldots, b_m(t, i))^\top$, and volatility matrix
$$\sigma(t, i)=(\sigma^1(t,i),\cdots,\sigma^m(t,i))^\top\equiv(\sigma^{k l}(t, i))_{m \times m}, \text{ for each }  i \in \mathcal{M}.
$$
In the rest of this paper, we assume that $r(\cdot, i), \mu^k(\cdot, i)$, and $\sigma^{k l}(\cdot, i)$ are deterministic, measurable, and bounded functions of $t$ on $[0, T]$, for all $k=1, \ldots, m, l=1, \ldots, m$, and $i \in \mathcal{M}$. In addition, there exists a constant $\delta>0$ such that $\sigma(t, i) \sigma(t, i)^\top \geq \delta I_m$ for all $t \in[0, T]$ and $i \in \mathcal{M}$, where $I_m$ is the $m \times m$ identity matrix.

Let $u(\cdot)=\{u_t, 0 \leq t \leq T\}$ be the portfolio (process), where $u_t:=\left(u_t^1, u_t^2, \ldots, u_t^m\right)^{\top}$ with $u_t^k$ being the {\it discounted value} invested in the $k$-th risky asset. Then, the dynamics of the {\it discounted wealth process} $\left\{x_t, 0 \leq t \leq T\right\}$ satisfies
\begin{equation}\label{eq:wealth-process}
d x_t=u_t^{\top} \sigma\left(t, \alpha_t\right)\left(\rho\left(t, \alpha_t\right) d t+d W_t\right), \quad t \in[0, T],\,\, x_0 \in \mathbb{R},\,\,\alpha_0 \in \mathcal{M} ,
\end{equation}
where $\rho(t, \alpha_t):=\sigma(t, \alpha_t)^{-1}\left(b(t, \alpha_t)-r(t, \alpha_t) \mathbf{1}\right)$ is the relative risk coefficient (market risk premium).

We study a continuous-time mean-variance portfolio selection problem under {\it no-shorting constraints} $u_t^k \geq 0$ for all $k \in \{1, \dots, m\}$ which is formulated as the following constrained optimization problem
\begin{equation}\label{eq:constrained-optimization-problem}
\min_{u(\cdot) \in L_{\mathcal{F}}^2(0, T ; \mathbb{R}_{+}^m)} \operatorname{Var}\left[x_T \mid x_0, \alpha_0\right], \quad \text { subject to (s.t.) } \quad \mathbb{E}\left[x_T \mid x_0, \alpha_0\right]=z,
\end{equation}
where the wealth–portfolio pair $(x(\cdot), u(\cdot))$ satisfies the dynamics \eqref{eq:wealth-process} and $z \in \mathbb{R}$ is a pre-specified {\it discounted expected return}. 
    
To ensure the problem is economically meaningful and practically well-posed, we impose the following assumption.
\begin{assumption}\label{Assump:appreciation-rate}
For all $t\in [0, T]$ and $i \in \mathcal{M}$, the appreciation rate strictly dominates the risk-free rate, i.e., $b(t, i) - r(t, i)\mathbf{1} > \mathbf{0}$ (component-wise). Furthermore, the initial wealth $x_0>0$, and the target expected return strictly exceeds the initial wealth, i.e., $z>x_0$.
\end{assumption}
    
To deal with the expectation constraint $\mathbb{E}[x_T\mid x_0, \alpha_0]=z$, we introduce a Lagrange multiplier $w \in \mathbb{R}$. Following \citet{zhou2000continuous}, the problem \eqref{eq:constrained-optimization-problem} is thus transformed into a standard stochastic optimal control problem parameterized by $w$:
\begin{equation}\label{eq:optimal-control-problem}
\min_{u(\cdot) \in L_{\mathcal{F}}^2(0, T ; \mathbb{R}_{+}^m)} J(u(\cdot) ; w) := \min_{u(\cdot) \in L_{\mathcal{F}}^2(0, T ; \mathbb{R}_{+}^m)} \mathbb{E}\left[\left(x_T-w\right)^2 \mid x_0, \alpha_0\right]-(w-z)^2.
\end{equation}
For each fixed $w \in \mathbb{R}$, let $u^*(\cdot; w)$ denote the optimal control of the unconstrained auxiliary problem \eqref{eq:optimal-control-problem}, and let $V(w) := J(u^*(\cdot; w); w)$ denote the corresponding optimal value function. Based on the stochastic linear-quadratic control framework and duality theory, \eqref{eq:constrained-optimization-problem} can be solved by maximizing the auxiliary value function over the multiplier $w$. Specifically, the optimal multiplier is given by
\begin{equation*}\label{eq:correct-multiplier}
w^* := \argmax_{w\in\mathbb{R}} V(w) = \argmax_{w\in\mathbb{R}} J(u^*(\cdot);w); w).
\end{equation*}
In particular, when evaluated at the optimal multiplier $w=w^*$, \eqref{eq:optimal-control-problem} and \eqref{eq:constrained-optimization-problem} share the exact same optimal control policy and optimal value. We refer to \citet{li2002Dynamic} for the rigorous proof of strong duality under no-shorting constraints.

\subsection{The Auxiliary Problem with Nonnegative Constraints.}
In practice, the decision maker (agent) often has limited or no knowledge of the environment (i.e., $b, \sigma, \rho, r$ are unknown). Reinforcement learning offers an efficient method for learning an optimal portfolio (control) for \eqref{eq:constrained-optimization-problem} in such unknown environments by repeatedly taking actions and interacting with the environment (trial-and-error procedure).
In continuous-time reinforcement learning, entropy regularization is commonly introduced to encourage policy exploration. \citet{wang2020continuous} applied this approach to the unconstrained mean–variance problem and showed that the optimal exploratory policy follows a Gaussian distribution. However, directly imposing the no-shorting constraint on the support of the randomized policy typically leads to a truncated Gaussian distribution, which complicates the analytical characterization of the value function and the associated coupled HJB equations. Following \citet{dai2025data}, we formulate an auxiliary exploratory problem by restricting the randomized policy to the Gaussian family and imposing the nonnegative constraint only on its mean.

Next, we establish the auxiliary problem corresponding to \eqref{eq:constrained-optimization-problem}. 
In the auxiliary problem, the portfolio (control) $u(\cdot)$ is replaced by a randomized portfolio process $\pi(\cdot)$, which is a distributional (density-function)-valued, progressively measurable stochastic process $\{\pi_t, 0 \leq t \leq T\}$. We introduce the nonnegative integral constraints 
$$
\int u \pi_t(u) d u \geq 0, \quad \forall t \in[0, T].
$$
In other words, at each time instant $t$, the distribution $\pi_t$ must have a nonnegative mean. For convenience, denote $\mathcal{P}_{+}(\mathbb{R}^m):=\left\{\pi \in \mathcal{P}(\mathbb{R}^m) \mid \int u \pi(u) d u \geq 0\right\}$. Then the integral constraints can also be written as $\pi_t\in \mathcal{P}_{+}\left(\mathbb{R}^m\right), \forall t\in [0, T]$. It is obvious that $\mathcal{P}\left(\mathbb{R}_{+}^m\right) \subset \mathcal{P}_{+}\left(\mathbb{R}^m\right) \subset \mathcal{P}\left(\mathbb{R}^m\right)$.

Following \citet{wang2020continuous}, for a randomized policy $\pi(\cdot)$, the corresponding exploratory dynamics is 
\begin{equation}\label{eq:exploratory-dynamics}
d x_t^\pi=\tilde{b}(t, \pi_t, \alpha_t) d t+\sum_{j=1}^m \tilde{\sigma}^j(t, \pi_t, \alpha_t) d W_t^j, \quad t \in[0, T],
\end{equation}
where $\tilde{b}(\cdot,\cdot,\cdot)$ and $\tilde{\sigma}(\cdot,\cdot,\cdot)$ are defined on $[0, T] \times \mathcal{P}(\mathbb{R}^m) \times \mathcal{M}$ are given by
$$
\tilde{b}(t, \pi, i):=\int_{\mathbb{R}^m}\left(\sigma(t, i) \rho(t, i)\right)^{\top} u \pi(u) du, \quad \tilde{\sigma}^l(t, \pi, i):=\left(\int_{\mathbb{R}^m}\left(\sum_{l=1}^m \sigma^{kl}(t, i) u_k\right)^2 \pi(u) du \right)^{1 / 2}.
$$

To formulate the auxiliary problem rigorously, we first specify the set of randomized feedback portfolios (policies). Consider a feedback policy $\pi:[0, T] \times \mathbb{R} \times \mathcal{M} \rightarrow \mathcal{P}_{+}(\mathbb{R}^m)$. Let $\{x_t^\pi, 0 \leq t \leq T\}$ denote the solution to \eqref{eq:exploratory-dynamics} with $\pi_t$ replaced by $\pi(t, x_t^\pi, \alpha_t)$. The policy $\pi$ is called admissible if its induced control process $\{\pi(t, x_t^\pi, \alpha_t), 0 \leq t \leq T\}$ is an admissible process that explicitly satisfies the nonnegative mean constraint. 
Fixing a measurable covariance function $\Sigma:[0, T] \times \mathbb{R}\times \mathcal{M} \rightarrow \mathbb{S}_{++}^m$, we denote by $\Pi$ the set of admissible Gaussian policies of the form $\pi(t, x, i)=\mathcal{N}(a(t, x, i), \Sigma(t, x, i))$, where $a:[0, T] \times \mathbb{R} \times \mathcal{M} \rightarrow \mathbb{R}_{+}^m$ is measurable. The corresponding auxiliary problem is 
\begin{equation}\label{eq:auxiliary-problem}
\min _{\pi(\cdot) \in \Pi} J(\pi(\cdot) ; w):=\mathbb{E}\left[\left(x_T^\pi-w\right)^2 \mid x_0, \alpha_0\right]-(w-z)^2.
\end{equation}
 
\begin{remark}
We clarify the relationship between the target problem \eqref{eq:optimal-control-problem} and the exploratory auxiliary problem \eqref{eq:auxiliary-problem} as follows:
\begin{itemize}
\item The Gaussian randomization in the auxiliary problem can be expressed as
$\pi(t, x, i) =\mathcal{N}(a(t, x, i), \Sigma(t, x, i))$, where $a(t, x, i) \in \mathbb{R}_{+}^m$. This can be interpreted as adding zero-mean Gaussian noise with covariance $\Sigma(t, x, i)$ to an admissible feedback control $a(t, x, i)$. Unlike randomization restricted to the constrained action space, this construction allows Gaussian exploration over the entire space $\mathbb{R}^m$, while the mean of the optimal policy remains feasible for the target problem \eqref{eq:optimal-control-problem}. 
\item Since $\Sigma(t, x, i)$ is positive definite, the Gaussian policy has full support on $\mathbb{R}^m$, and individual controls sampled during exploration may violate the no-shorting constraint. Nevertheless, its mean $a_t:= \int_{\mathbb{R}^m} u \, \pi_t(u) du \geq \mathbf{0}$ (component-wise) always satisfies the nonnegative constraint. Thus, the Gaussian noise serves only for exploration, while the mean of the optimal policy implements the investment control in the target problem \eqref{eq:optimal-control-problem}.
\item Under suitable choices of the variance function $\Sigma(\cdot, \cdot)$, the optimal mean process $a_t^*:= \int_{\mathbb{R}^m} u \, \pi_t^*(u) du$ of the auxiliary problem coincides exactly with the optimal control of the target problem \eqref{eq:optimal-control-problem}. This establishes a theoretical bridge between the exploratory framework and the original constraint problem, which is the main result of this paper and will be proved in the subsequent part.
\end{itemize}
\end{remark}

To verify our results, we establish the following lemma.
\begin{lemma}\label{lem:scaling-lemma}
Let $\Sigma \in \mathbb{S}_{++}^m$ be a strictly positive definite matrix, $p \in \mathbb{R}^m$ be a constant vector, and $s>0$ be a scaling factor. Then, for any given market regime $k \in \mathcal{M}$, the following properties hold:
$$
\min_{a \in \mathbb{R}_{+}^m} \left\{ \frac{1}{2} a^{\top} \Sigma a + sa^{\top}p \right\} = s^2 \cdot \min_{\mu \in \mathbb{R}_{+}^m} \left\{ \frac{1}{2} \mu^{\top} \Sigma \mu + \mu^{\top} p \right\}.
$$
Moreover, there is 
$$\begin{aligned}
\argmin_{a \in \mathbb{R}_{+}^m} \left\{ \frac{1}{2} a^{\top} \Sigma a + sa^{\top}p \right\} = s \cdot \argmin_{\mu \in \mathbb{R}_{+}^m} \left\{ \frac{1}{2} \mu^{\top} \Sigma \mu + \mu^{\top} p \right\},\\
\argmin_{a \in \mathbb{R}_{+}^m} \left\{ \frac{1}{2} a^{\top} \Sigma a - sa^{\top}p \right\} = s \cdot \argmin_{\mu \in \mathbb{R}_{+}^m} \left\{ \frac{1}{2} \mu^{\top} \Sigma \mu - \mu^{\top} p \right\}.
\end{aligned}$$
\end{lemma}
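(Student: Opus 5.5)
The plan is a change of variables $a = s\mu$, using that $\mathbb{R}_{+}^m$ is a cone. Since $s>0$, the map $\mu \mapsto s\mu$ is a bijection of $\mathbb{R}_{+}^m$ onto itself: if $\mu \geq \mathbf{0}$ componentwise then $s\mu \geq \mathbf{0}$, and conversely $a \geq \mathbf{0}$ gives $a/s \geq \mathbf{0}$. Substituting $a = s\mu$ into the objective gives
$$
\frac{1}{2}(s\mu)^{\top}\Sigma (s\mu) + s(s\mu)^{\top}p = s^2\Big(\frac{1}{2}\mu^{\top}\Sigma\mu + \mu^{\top}p\Big).
$$
So minimizing the left-hand function over $a \in \mathbb{R}_{+}^m$ is the same as minimizing $s^2 f(\mu)$ over $\mu \in \mathbb{R}_{+}^m$, where $f(\mu) := \frac{1}{2}\mu^{\top}\Sigma\mu + \mu^{\top}p$. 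Because $s^2>0$ is a constant, the minimal value is $s^2 \min f$, and the minimizers correspond exactly under the bijection. This gives both the value identity and the first argmin identity. The regime index $k$ plays no role, since $\Sigma$ and $p$ are fixed data, possibly depending on $k$.

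Before that, I would check that the minima exist and are attained, so that $\min$ and $\argmin$ make sense. Since $\Sigma \in \mathbb{S}_{++}^m$, $f$ is strictly convex and coercive: $f(\mu) \geq \frac{\lambda_{\min}(\Sigma)}{2}|\mu|^2 - |p||\mu| \to \infty$. Also $\mathbb{R}_{+}^m$ is closed, convex and nonempty. Hence the minimizer exists and is unique, so each $\argmin$ is a singleton and the identity $\argmin = s\cdot\argmin$ can be read as an equality of points (or of sets, via the bijection).

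The second argmin identity follows by applying the first with $p$ replaced by $-p$, which is still an arbitrary constant vector.

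No step is a genuine obstacle. The only point needing care is the cone-invariance of $\mathbb{R}_{+}^m$ under positive scaling. It is what makes the feasible set unchanged by the substitution, and it would fail for a general convex constraint set, so I would state it explicitly.
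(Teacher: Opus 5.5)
Your proposal is correct and follows essentially the same route as the paper: the substitution $a = s\mu$, the invariance of the cone $\mathbb{R}_{+}^m$ under positive scaling, the identity $f_s(s\mu) = s^2 f_1(\mu)$, and the resulting correspondence of minimizers. The paper obtains the second identity by saying ``similarly,'' which amounts to your replacement $p \mapsto -p$, and your check that the minimizer exists and is unique (via strict convexity and coercivity) is a small addition the paper leaves implicit.
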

\begin{proof}{proof}
The proof has two parts: verifying the relationship between the minimizer and deriving the scaled minimum value.
        
Let the objective function of the problem on the left-hand side be denoted by $f_s(a):= \frac{1}{2} a^{\top} \Sigma a + sa^{\top}p$. The minimizer is defined as:
$$
a^*(s) = \argmin_{a \in \mathbb{R}_+^m} \left( \frac{1}{2} a^{\top} \Sigma a + sa^{\top}p \right).
$$
Since $s > 0$, we can perform a change of variables by letting $a = s \cdot \mu$. Since $\mathbb{R}_+^m$ is a cone and $s > 0$, the mapping $a = s\mu$ is a bijection on $\mathbb{R}_+^m$ (i.e., $\mu \in \mathbb{R}_+^m \iff s\mu \in \mathbb{R}_+^m$ for $s > 0$). Substituting $a = s\mu$ into the objective function, we obtain
$$\begin{aligned}
f_s(s\mu) = &\frac{1}{2} (s\mu)^{\top} \Sigma (s\mu) + s (s\mu)^{\top} p= s^2 \left( \frac{1}{2} \mu^{\top} \Sigma \mu + \mu^{\top} p \right).
\end{aligned}$$
Let $f_1(\mu) := \frac{1}{2} \mu^{\top} \Sigma \mu + \mu^{\top} p$ be the objective function for the case $s=1$. Thus, $f_s(s\mu) = s^2 f_1(\mu)$. It follows that
$$
\min_{a \in \mathbb{R}_+^m} f_s(a) = \min_{\mu \in \mathbb{R}_+^m} s^2 f_1(\mu)= s^2\cdot\min_{\mu \in \mathbb{R}_+^m}  f_1(\mu),
$$
i.e.,
$
\min_{a \in \mathbb{R}_{+}^m} \left\{ \frac{1}{2} a^{\top} \Sigma a + sa^{\top}p \right\} = s^2 \cdot \min_{\mu \in \mathbb{R}_{+}^m} \left\{ \frac{1}{2} \mu^{\top} \Sigma \mu + \mu^{\top} p \right\}$.       
Set $a^*\in\argmin\limits_{a \in \mathbb{R}_+^m} f_s(a)$, and define $\mu^* =\frac{a^*}{s}$. Next, we only need to prove $\mu^* \in \argmin\limits_{\mu \in \mathbb{R}_+^m} f_1(\mu)$.
Note that
$$
f_s(s\cdot \mu^*)=f_s(a^*)=\min_{a \in \mathbb{R}_+^m} f_s(a)= \min_{\mu \in \mathbb{R}_+^m} f_s(s\mu).
$$
Using the identity $f_s(s\mu) = s^2 f_1(\mu)$, we obtain
$$    
s^2 f_1(\mu^*) = f_s(s \cdot \mu^*)=\min_{\mu \in \mathbb{R}_+^m} f_s(s\mu)= \min_{\mu \in \mathbb{R}_+^m} \left( s^2 f_1(\mu) \right) = s^2 \min_{\mu \in \mathbb{R}_+^m} f_1(\mu).
$$
Since $s > 0$, dividing both sides by $s^2$ yields
$$ 
f_1(\mu^*) = \min_{\mu \in \mathbb{R}_+^m} f_1(\mu).
$$        
Therefore, we get $\mu^* \in \argmin\limits_{\mu \in \mathbb{R}_+^m} f_1(\mu)$. By the bijective nature of the mapping $a=s\mu$, this establishes the equivalence and completes the proof of
$$    
\argmin\limits_{a \in \mathbb{R}_+^m} f_s(a) = s\cdot \argmin\limits_{\mu \in \mathbb{R}_+^m} f_1(\mu),
$$
i.e., $\argmin_{a \in \mathbb{R}_{+}^m} \left\{ \frac{1}{2} a^{\top} \Sigma a + sa^{\top}p \right\} = s \cdot \argmin_{\mu \in \mathbb{R}_{+}^m} \left\{ \frac{1}{2} \mu^{\top} \Sigma \mu + \mu^{\top} p \right\}$.

Similarly, $\argmin\limits_{a \in \mathbb{R}_{+}^m} \left\{ \frac{1}{2} a^{\top} \Sigma a - sa^{\top}p \right\} = s \cdot \argmin\limits_{\mu \in \mathbb{R}_{+}^m} \left\{ \frac{1}{2} \mu^{\top} \Sigma \mu - \mu^{\top} p \right\}$ can be proved.
\end{proof}

\begin{remark}
Under the Markov regime-switching model, we substitute $\Sigma(t, x, k)$ with the regime-dependent covariance matrix $\sigma(t, k)\sigma(t, k)^{\top}$ and $p$ with the corresponding risk premium. Since the scaling factor $s$ (typically representing the wealth deviation $|x-w|$) is a scalar at any given time $t$, Lemma \ref{lem:scaling-lemma} ensures that the value function maintains its piecewise quadratic structure across all regimes $k \in \mathcal{M}$. This reduces the coupled PDE system to a system of tractable ODEs.
\end{remark}

\section{Explicit Solutions of the Auxiliary Problem.}\label{sec3}
    
In this section, we derive an explicit solution to the auxiliary problem \eqref{eq:auxiliary-problem} and connect it to the target problem \eqref{eq:optimal-control-problem} by choosing the variance function $\Sigma(\cdot, \cdot, \cdot)$ appropriately. This characterization provides a theoretical basis for the model-free algorithm developed in the next section.

Define the value function $\tilde{V}: [0, T] \times \mathbb{R} \times \mathcal{M} \to \mathbb{R}$ of \eqref{eq:auxiliary-problem} by
$$
\tilde{V}(t, x, i ; w):=\min_{\pi \in \Pi} \mathbb{E}\left[\left(x_T^\pi-w\right)^2-(w-z)^2 \mid x_t^\pi=x, \alpha_t=i\right].
$$
For a given regime $i \in \mathcal{M}$, let $v(t, x, i)$ denote the candidate value function. The corresponding Hamiltonian is
$$\begin{aligned} 
\mathcal{H}(t, x, i, \pi) &:= \frac{\partial}{\partial x} v(t, x, i ; w) \tilde{b}(t, \pi, i)+\frac{1}{2} \frac{\partial^2}{\partial x^2} v(t, x, i ; w)\left\|\tilde{\sigma}(t, \pi, i)\right\|^2 \\
& =\int_{\mathbb{R}^m}\left(\frac{\partial }{\partial x} v(t, x, i ; w) u^{\top} \sigma(t, i) \rho(t, i)+\frac{1}{2} \frac{\partial^2}{\partial x^2} v(t, x, i ; w) u^{\top} \sigma(t, i) \sigma(t, i)^{\top} u\right) \pi(u) du,
\end{aligned}$$
where $(t, x, i, \pi) \in[0, T] \times \mathbb{R} \times \mathcal{M} \times \mathcal{P}_{+}(\mathbb{R}^m)$. Consequently, the HJB equation for $v(t, x, i)$ is 
\begin{equation}\label{eq:HJB}
\left\{\begin{array}{l}
\min\limits_{\pi \in \mathcal{P}_{+}(\mathbb{R}^m)} \int_{\mathbb{R}_{+}^m}\bigg\{\frac{\partial}{\partial x} v(t, x, i) u^{\top} \sigma(t, i) \rho(t, i)+\frac{1}{2} \frac{\partial^2}{\partial x^2} v(t, x, i) u^{\top} \sigma(t, i) \sigma(t, i)^{\top} u\bigg\} \pi(u) d u\\
\quad+\frac{\partial}{\partial t} v(t, x, i)+\sum\limits_{j=1}^L q_{ij} v(t, x, j)=0, \\
v(T, x, i)=(x-w)^2-(w-z)^2, \quad \forall i \in \mathcal{M}.
\end{array}\right.
\end{equation}
For each $(t, x, i)$, the randomized policy is restricted to $\pi=\mathcal{N}(a, \Sigma(t, x, i))$. Substituting it into \eqref{eq:HJB}, we obtain the following HJB equation:
\begin{equation}\label{eq:optimal-HJB-equation}
\left\{\begin{array}{l}
\frac{\partial}{\partial t} v(t, x, i)+\min\limits_{a \in \mathbb{R}_{+}^m}\left(\frac{\partial}{\partial x} v(t, x, i) a^{\top} \sigma(t, i) \rho(t, i)+\frac{1}{2} \frac{\partial^2}{\partial x^2} v(t, x, i) a^{\top} \sigma(t, i) \sigma(t, i)^{\top} a\right)\\ \quad\quad\quad\quad\quad\quad\quad\quad+\sum\limits_{j=1}^L q_{ij} v(t, x, j)+\frac{1}{2} \frac{\partial^2}{\partial x^2} v(t, x, i) \operatorname{tr}\{\Sigma(t, x, i) \sigma(t, i) \sigma(t, i)^{\top}\}=0, \\
v(T, x, i)=(x-w)^2-(w-z)^2, \quad \forall i \in \mathcal{M} .
\end{array}\right.
\end{equation}
The solution to \eqref{eq:optimal-HJB-equation} depends on the appropriately chosen exploration covariance $\Sigma$. When $\Sigma\equiv0$, the additional diffusion term vanishes, and the equation reduces to the HJB equation associated with the original constrained mean–variance problem studied by \citet{hu2022constrained}.

We now specify the following state-dependent variance function for exploration:
$$\Sigma(t, x, i) = \Psi(t, i)(x-w)^2,$$
where $\Psi: [0, T] \times \mathcal{M} \rightarrow \mathbb{S}^m_{++}$ is any given matrix-valued measurable function.

For later use, define $\mu^{-}(t, i)$ and $\mu^{+}(t, i)$ by
\begin{equation}\label{eq:mu-gamma12}
\begin{aligned}
\mu^{-}(t, i):=&\argmin\limits_{a \in \mathbb{R}_{+}^m}\left\{\frac{1}{2} a^{\top} \sigma(t, i) \sigma(t, i)^{\top} a - a^{\top} \sigma(t, i) \rho(t, i) \right\},\\
\mu^{+}(t, i):=&\argmin\limits_{a \in \mathbb{R}_{+}^m}\left\{\frac{1}{2} a^{\top} \sigma(t, i) \sigma(t, i)^{\top} a + a^{\top} \sigma(t, i) \rho(t, i) \right\}.
\end{aligned}
\end{equation}
Next, we are going to verify that
\begin{equation*}
\tilde{V}^{-}(t, x, i; w) = \tilde{P}^{-}(t, i)(x-w)^2 - (w-z)^2,
\end{equation*}
is the solution of the HJB equation \eqref{eq:optimal-HJB-equation} strictly within the region $\Gamma_-:=\{(t, x)\mid x-w<0\}$, and 
\begin{equation*}
\tilde{V}^{+}(t, x, i; w) = \tilde{P}^{+}(t, i)(x-w)^2 - (w-z)^2,
\end{equation*}
is the solution of the HJB equation \eqref{eq:optimal-HJB-equation} strictly within the region $\Gamma_+:=\{(t, x)\mid x-w \geq 0\}$, respectively.

We first consider the region $\Gamma_-$. 
The partial derivatives of $\tilde{V}^{-}(t, x, i; w)$ are given by:
\begin{equation}\label{eq:partial-V}
\begin{aligned}
\frac{\partial }{\partial x}\tilde{V}^{-}(t, x, i; w) &= 2 \tilde{P}^{-}(t, i)(x-w), \quad \frac{\partial^2 }{\partial x^2}\tilde{V}^{-}(t, x, i; w) = 2 \tilde{P}^{-}(t, i)\\
\frac{\partial }{\partial t}\tilde{V}^{-}(t, x, i; w) &= \frac{\partial}{\partial t}\tilde{P}^{-}(t, i)(x-w)^2.
\end{aligned}
\end{equation}
Applying Lemma \ref{lem:scaling-lemma} and the definition of $\mu^{-}(t, i)$ in \eqref{eq:mu-gamma12}, we obtain 
$$\begin{aligned}
& \min_{a \in \mathbb{R}_+^m} \left\{\frac{1}{2}a^{\top} \sigma(t, i) \sigma(t, i)^{\top} a + (x-w)a^{\top} \sigma(t, i) \rho(t, i)\right\} \\
= & (x-w)^2 \min_{\mu \in \mathbb{R}_+^m} \left\{ \frac{1}{2} \mu^{\top}(t, i) \sigma(t, i) \sigma(t, i)^{\top} \mu(t, i) - \mu^{\top}(t, i) \sigma(t, i) \rho(t, i) \right\}\\
=&(x-w)^2 \bigg[\frac{1}{2} \mu^{-}(t, i)^{\top} \sigma(t, i) \sigma(t, i)^{\top} \mu^{-}(t, i)-\mu^{-}(t, i)^{\top} \sigma(t, i) \rho(t, i)\bigg].
\end{aligned}$$
Substituting \eqref{eq:partial-V} into the left-hand side (LHS) of \eqref{eq:optimal-HJB-equation} and noting $\Sigma(t, x, i) = \Psi(t, i)(x-w)^2$, yields
$$\begin{aligned}
\text{LHS} = & \frac{\partial}{\partial t}\tilde{P}^{-}(t, i)(x-w)^2 + \sum_{j=1}^L q_{ij} \left[\tilde{P}^{-}(t, j)(x-w)^2- (w-z)^2\right]\\
&+ \tilde{P}^{-}(t, i)\operatorname{tr}\left\{\Sigma(t, x, i) \sigma(t, i) \sigma(t, i)^{\top}\right\}\\
&+ 2 \tilde{P}^{-}(t, i)\min_{a \in \mathbb{R}_+^m} \left\{\frac{1}{2}a^{\top} \sigma(t, i) \sigma(t, i)^{\top} a + (x-w)a^{\top} \sigma(t, i) \rho(t, i) \right\}\\
= & (x-w)^2 \left[\frac{\partial}{\partial t}\tilde{P}^{-}(t, i) + \sum_{j=1}^L q_{ij} \tilde{P}^{-}(t, j) \right] + \tilde{P}^{-}(t, i) \operatorname{tr}\left\{ \sigma(t, i) \sigma(t, i)^{\top} \Psi(t, i) (x-w)^2 \right\} \\
& + 2\tilde{P}^{-}(t, i) (x-w)^2 \left[\frac{1}{2} \mu^{-}(t, i)^{\top} \sigma(t, i) \sigma(t, i)^{\top} \mu^{-}(t, i)-\mu^{-}(t, i)^{\top} \sigma(t, i) \rho(t, i)\right]\\
=&\bigg\{\frac{\partial}{\partial t}\tilde{P}^{-}(t, i) + \sum\limits_{j=1}^L q_{ij} \tilde{P}^{-}(t, j) + \tilde{P}^{-}(t, i) \Big[\mu^{-}(t, i)^{\top} \sigma(t, i) \sigma(t, i)^{\top} \mu^{-}(t, i) - 2\mu^{-}(t, i)^{\top} \sigma(t, i) \rho(t, i)\\ &+ \operatorname{tr}(\Psi(t, i) \sigma(t, i) \sigma(t, i)^{\top}) \Big]\bigg\}(x-w)^2.
\end{aligned}$$
Since $(x-w)^2>0$ in $\Gamma_{-}$, the HJB equation holds in this region if and only if $\tilde{P}^{-}(t, i)$ solves the following coupled linear ODE system:
\begin{equation}\label{eq:tilde_P_in_region1}
\begin{cases}
& \frac{\partial}{\partial t}\tilde{P}^{-}(t, i)+\sum\limits_{j=1}^L q_{ij} \tilde{P}^{-}(t, j)+ \tilde{P}^{-}(t, i) \Big[\mu^{-}(t, i)^{\top} \sigma(t, i) \sigma(t, i)^{\top} \mu^{-}(t, i) \\
&\quad - 2\mu^{-}(t, i)^{\top} \sigma(t, i) \rho(t, i) + \operatorname{tr}(\Psi(t, i) \sigma(t, i) \sigma(t, i)^{\top}) \Big]  = 0, \\
&\tilde{P}^{-}(T, i) = 1, \quad i\in \mathcal{M}.
\end{cases}
\end{equation}
An analogous derivation applies to region $\Gamma_+$ yields the corresponding system for $\tilde{P}^{+}(t, i)$:
\begin{equation}\label{eq:tilde_P_in_region2}
\begin{cases}
& \frac{\partial}{\partial t}\tilde{P}^{+}(t, i)+\sum\limits_{j=1}^L q_{ij} \tilde{P}^{+}(t, j) + \tilde{P}^{+}(t, i) \Big[\mu^{+}(t, i)^{\top} \sigma(t, i) \sigma(t, i)^{\top} \mu^{+}(t, i)\\
& \quad +2\mu^{+}(t, i)^{\top} \sigma(t, i) \rho(t, i) + \operatorname{tr}(\Psi(t, i) \sigma(t, i) \sigma(t, i)^{\top}) \Big]  = 0, \\
&\tilde{P}^{+}(T, i) = 1, \quad i\in \mathcal{M}.
\end{cases}
\end{equation}
By the Feynman–Kac formula, this ODE system admits a unique solution, given by the following representations:
\begin{equation}\label{eq:p-t-gamma12}
\begin{aligned}
\tilde{P}^{-}(t, i) & :=\mathbb{E}\bigg[\exp \bigg(\int_t^T[(\mu^{-}_s)^{\top} \sigma_s \sigma_s^{\top} \mu^{-}_s - 2(\sigma_s \rho_s)^{\top} \mu^{-}_s + \operatorname{tr}\{\Psi_s \sigma_s \sigma_s^{\top}\}] d s\bigg) | \alpha_t=i\bigg], \\
\tilde{P}^{+}(t, i) & :=\mathbb{E}\bigg[\exp \bigg(\int_t^T[(\mu^{+}_s)^{\top} \sigma_s \sigma_s^{\top} \mu^{+}_s + 2(\sigma_s \rho_s)^{\top} \mu^{+}_s + \operatorname{tr}\{\Psi_s \sigma_s \sigma_s^{\top}\}] d s\bigg) | \alpha_t=i\bigg].
\end{aligned}
\end{equation}
which also imply $\tilde{P}^{-}(t, i)>0$ and $\tilde{P}^{+}(t, i)>0$.
Here, for brevity, we write $\mu^{-}_s = \mu^{-}(s, \alpha_s)$, $\mu^{+}_s = \mu^{+}(s, \alpha_s)$, $\sigma_s = \sigma(s, \alpha_s)$, $\rho_s = \rho(s, \alpha_s)$, and $\Psi_s = \Psi(s, \alpha_s)$.

Substituting \eqref{eq:tilde_P_in_region1} and \eqref{eq:tilde_P_in_region2} into \eqref{eq:optimal-HJB-equation}, respectively, yields
$\text{LHS} = 0$, in both $\Gamma_-$ and $\Gamma_+$.
Thus, the corresponding quadratic functions satisfy the HJB equation classically in their respective regions. Combining these two solutions, we construct the following value function of the auxiliary problem \eqref{eq:auxiliary-problem}:
\begin{equation}\label{eq:optimal_value_function}
\tilde{V}(t, x, i ; w)= \begin{cases}\tilde{P}^{-}(t, i)(x-w)^2-(w-z)^2, & \text { if } x-w<0, \\
\tilde{P}^{+}(t, i)(x-w)^2-(w-z)^2, & \text { if } x-w>0, \\
-(w-z)^2, & \text { if } x-w=0,\end{cases}
\end{equation}
where $\tilde{P}^{-}(t, i)$ and $\tilde{P}^{+}(t, i)$ are defined in \eqref{eq:p-t-gamma12}.

We next determine the corresponding minimizing control. In the region $\Gamma_-$, the minimization term in \eqref{eq:optimal-HJB-equation} becomes
\begin{equation*}
\begin{aligned}
   &\min\limits_{a \in \mathbb{R}_{+}^m}\left(\frac{\partial}{\partial x} \tilde{V}(t, x, i ; w) a^{\top} \sigma(t, i) \rho(t, i)+\frac{1}{2} \frac{\partial^2}{\partial x^2} \tilde{V}(t, x, i ; w) a^{\top} \sigma(t, i) \sigma(t, i)^{\top} a\right) \\
   &=2 \tilde{P}^{-}(t, i) \min _{a \in \mathbb{R}_{+}^m}\left\{\frac{1}{2} a^{\top} \sigma(t, i) \sigma(t, i)^{\top} a+(x-w) a^{\top} \sigma(t, i) \rho(t, i)\right\}.
\end{aligned}
\end{equation*}
An analogous expression holds in $\Gamma_{+}$, with $\tilde{P}^{-}(t, i)$ replaced by $\tilde{P}^{+}(t, i)$. Since $\tilde{P}^{-}(t, i)>0$ and $\tilde{P}^{+}(t, i)>0$,
 Lemma \ref{lem:scaling-lemma} yields the minimizer
\begin{equation}\label{eq:optimal-control}
\begin{aligned}
a^*(t, x, i;w)
=\begin{cases}
-(x-w)\mu^{-}(t, i), &\text{ if } x-w<0,\\
(x-w)\mu^{+}(t, i), &\text{ if } x-w\geq0,
\end{cases}
\end{aligned}
\end{equation}
where $\mu^{-}(t, i)$ and $\mu^{+}(t, i)$ are defined by \eqref{eq:mu-gamma12}. Notably, $a^*$ coincides with the theoretical optimal control of the target problem derived by \citet{hu2022constrained}.

The following theorem establishes the continuity and viscosity properties of \eqref{eq:optimal_value_function} at $x=w$, together with its optimality.

\begin{theorem}\label{thm:state-dependent}
Let $\Psi:[0, T] \times \mathcal{M} \rightarrow \mathbb{S}_{++}^m$ be a bounded, continuous matrix-valued function, and let $\Pi$ be the admissible policy space defined for the auxiliary problem \eqref{eq:auxiliary-problem}, with the exploration covariance function specified by $\Sigma(t, x, i)=\Psi(t, i)(x-w)^2$. Then, $\tilde{V}(t, x, i ; w)$ defined in \eqref{eq:optimal_value_function} is the optimal value function of the auxiliary problem \eqref{eq:auxiliary-problem} and a unique viscosity solution of the associated HJB equation \eqref{eq:optimal-HJB-equation}. 
Moreover, the auxiliary problem's \eqref{eq:auxiliary-problem} optimal control is:
\begin{equation}\label{eq:optimal_policy}
\pi^*(t, x, i ; w)= \begin{cases}\mathcal{N}\left(-(x-w) \mu^{-}(t, i), \Psi(t, i)(x-w)^2\right), & \text { if } x-w<0, \\
\mathcal{N}\left((x-w) \mu^{+}(t, i), \Psi(t, i)(x-w)^2\right), & \text { if } x-w \geq 0.\end{cases}
\end{equation}
Its mean $a^*(t, x, i; w):=\int u \pi^*(u \mid t, x, i; w) d u$ is given by \eqref{eq:optimal-control} and coincides exactly with the optimal control of the target problem \eqref{eq:optimal-control-problem}. 
\end{theorem}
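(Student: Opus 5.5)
The plan has four parts: check the behaviour of $\tilde V$ at the interface $x=w$, show it is a viscosity solution there, prove optimality by a verification argument, and deduce uniqueness and the link to the target problem.

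\textbf{Step 1: behaviour at $x=w$.} Continuity is immediate, because both quadratic branches tend to $-(w-z)^2$ as $x\to w$. For the derivative, $\partial_x\tilde V$ equals $2\tilde P^{\pm}(t,i)(x-w)$, which tends to $0$ from both sides. Hence $\tilde V(\cdot,\cdot,i)$ is $C^1$ in $x$ with $\partial_x\tilde V(t,w,i)=0$. Since $\tilde P^{+}\neq\tilde P^{-}$ in general, $\tilde V$ is not $C^2$ there: the second derivative jumps from $2\tilde P^-$ to $2\tilde P^+$, and both values are strictly positive.

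\textbf{Step 2: viscosity property at $x=w$.} Away from $x=w$ the equation \eqref{eq:optimal-HJB-equation} holds classically, by the computations leading to \eqref{eq:tilde_P_in_region1}--\eqref{eq:tilde_P_in_region2}. At $x=w$ the exploration term $\operatorname{tr}\{\Sigma\sigma\sigma^\top\}$ vanishes because $\Sigma=\Psi(x-w)^2$. The coupling term is $\sum_j q_{ij}\tilde V(t,w,j)=-(w-z)^2\sum_j q_{ij}=0$, and $\partial_t\tilde V(t,w,i)=0$. Take a smooth test function $\varphi$ touching $\tilde V$ from above or below at $(t,w,i)$. Then $\varphi_x=0$ and $\varphi_t=0$, and $\varphi_{xx}$ is bounded by the one-sided curvatures: $\varphi_{xx}\ge 2\max(\tilde P^\pm)$ for touching from above, $\varphi_{xx}\le 2\min(\tilde P^\pm)$ for touching from below. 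The Hamiltonian with $\varphi_x=0$ reduces to $\min_{a\ge0}\tfrac12\varphi_{xx}a^\top\sigma\sigma^\top a$. This equals $0$ whenever $\varphi_{xx}\ge0$, and it is $\le 0$ in every case. The subsolution inequality therefore holds. The supersolution inequality needs the minimum to be $\ge0$. For a $C^{1,2}$ test function from below, the condition is only $\varphi_{xx}\le 2\min\tilde P^\pm$, which still allows negative values of $\varphi_{xx}$ and hence a minimum of $-\infty$. I expect to handle this by using the semijet formulation with $\varphi_x=0$ and arguing that the relevant second-order quantity can be taken nonnegative. Failing that, I will rely on Step 3, which gives optimality without needing the viscosity property at the interface.

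\textbf{Step 3: verification, the main obstacle.} Take any admissible Gaussian policy with mean $a\ge0$ and apply Itô's formula to $\tilde V(s,x_s^\pi,\alpha_s)$, using the Markov-chain generator term $\sum_j q_{ij}\tilde V(\cdot,j)$. Itô's formula requires a $C^2$ function, and $\tilde V$ fails this only on the line $x=w$. There are two routes. The first is the Meyer–Itô/Itô–Tanaka formula: since $\tilde V$ is $C^1$ with bounded, piecewise continuous second derivative, no local-time term appears. The second is to mollify in $x$ and pass to the limit. Either route gives that $\tilde V(s,x_s,\alpha_s)$ is a local submartingale under every admissible $\pi$, and a local martingale under $\pi^*$. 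Square integrability of $x^\pi$ (bounded coefficients and $\Sigma$ quadratic in $x-w$) makes these true (sub)martingales. Hence $J(\pi)\ge\tilde V(0,x_0,\alpha_0)=J(\pi^*)$. For the optimal policy, the closed-loop SDE is linear in $x-w$ on each side. If the state starts at $w$, it stays at $w$, because drift and diffusion both vanish there. So the optimal SDE is well posed, and the mean $a^*\ge0$ makes $\pi^*$ admissible.

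\textbf{Step 4: uniqueness and link to the target problem.} For uniqueness, I will invoke a comparison principle for viscosity solutions of weakly coupled HJB systems with quadratic growth, or argue directly that any viscosity solution equals the value function via the verification of Step 3. The final claim, that $a^*$ coincides with the optimal control of the target problem, follows by comparing \eqref{eq:optimal-control} with the solution of \citet{hu2022constrained}. The feedback $\mu^\pm$ is determined by the same pointwise minimizations in \eqref{eq:mu-gamma12} and does not depend on $\Psi$, since $\Psi$ enters only $\tilde P^\pm$ and not the argmin. I regard Step 3, justifying Itô's formula across the kink at $x=w$, as the main technical obstacle.
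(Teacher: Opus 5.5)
\textbf{Gap in Step 2.} You pair the sub- and supersolution inequalities the wrong way round, and that is the only source of your ``obstacle''. The HJB equation here is written as $v_t+\min_{a\in\mathbb{R}^m_+}\{\cdots\}+\sum_j q_{ij}v_j+\tfrac12 v_{xx}\operatorname{tr}\{\Sigma\sigma\sigma^\top\}=0$ for a minimization problem. For such an equation, a test function touching $\tilde V(\cdot,\cdot,i)$ from \emph{above} (the subsolution test) must make the left-hand side $\ge 0$. A test function touching from \emph{below} (the supersolution test) must make it $\le 0$. This is what the standard conventions give after rewriting the equation in backward form $-v_t+\sup_a\{-\cdots\}-\cdots=0$, as the paper does.

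With your pairing the supersolution property is actually false, so no semijet argument can repair it. Take $\varphi(t,x)=-(w-z)^2-(x-w)^2$. It lies below $\tilde V(\cdot,\cdot,i)$ because $\tilde P^\pm>0$, and it touches $\tilde V$ along $x=w$. Since $\varphi_{xx}=-2$, we get $\min_{a\ge0}\tfrac12\varphi_{xx}a^\top\sigma\sigma^\top a=-\infty$. Falling back on Step 3 would then leave the viscosity assertion of the theorem unproved.

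With the correct pairing, both checks follow from facts you already wrote down. From above, $\varphi_{xx}(t_0,w)\ge 2\max\{\tilde P^-,\tilde P^+\}>0$, so the minimum equals $0$. From below, the choice $a=0$ shows the minimum is $\le 0$ whatever the sign of $\varphi_{xx}$. This is exactly the paper's argument on $\Gamma_0$.

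\textbf{Step 3 takes a different and stronger route.} The paper's ``verification'' step only shows that a viscosity solution of the same piecewise-quadratic form, with positive $C^1$ coefficients $\widehat P^\pm$, satisfies the same linear ODE systems and hence equals $\tilde V$. It never shows that $\tilde V$ is the value function of \eqref{eq:auxiliary-problem}, nor that $\pi^*$ attains it. Your It\^o--Tanaka (or mollification) argument does prove this. It uses only that $\tilde V$ is $C^1$ in $x$ with bounded, piecewise continuous $\partial_{xx}\tilde V$. You should state two details:
\begin{enumerate}
\item On $\{x_s=w\}$ the time, gradient and coupling terms vanish, and $d\langle x\rangle_s$ does not charge this set (occupation-time formula). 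So the drift of $\tilde V(s,x_s,\alpha_s)$ is nonnegative even for admissible policies that hold the state at $w$.
\item Pass from local to true (sub)martingales by localization together with $\mathbb{E}\sup_{t\le T}|x_t|^2<\infty$, not from square integrability of $x_t$ alone. That bound follows from $\mathbb{E}\int_0^T|a_t|^2dt<\infty$, the quadratic growth of $\operatorname{tr}\{\Sigma\sigma\sigma^\top\}$ in $x-w$, BDG and Gronwall.
\end{enumerate}

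\textbf{Uniqueness in Step 4.} Your second option cannot give uniqueness among all viscosity solutions, because verification needs the regularity that $\tilde V$ happens to have. It gives uniqueness only among $C^1$, piecewise $C^{1,2}$ solutions of quadratic growth. That class already contains the paper's piecewise-quadratic ansatz, which is the only class in which the paper proves uniqueness. Your first option would give the full claim, but it is not off the shelf. Because $\mathbb{R}^m_+$ is unbounded, the Hamiltonian is $-\infty$ whenever $\varphi_{xx}<0$. You would need to identify a comparison principle for weakly coupled systems that handles unbounded controls and quadratic growth.
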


\begin{proof}
We divide the proof into three steps: verify that the proposed value function satisfies the HJB equation \eqref{eq:optimal-HJB-equation} in each region, examine its regularity and viscosity properties at the interface $\Gamma_0:=\{(t, x)\mid x-w =0\}$, establish the optimality of $\tilde{V}(t, x, i; w)$ and the corresponding Gaussian policy.
        
\noindent{\it 1. Local Verification within $\Gamma_-$ and $\Gamma_+$.} Consider the region $\Gamma_-$, we have
\begin{equation}\label{eq:p-t-gamma12-final}
\tilde{V}(t, x, i; w) = \tilde{P}^{-}(t, i)(x-w)^2 - (w-z)^2, 
\end{equation} 
where $\tilde{P}^{-}>0$.
By Lemma \ref{lem:scaling-lemma} and the definition of $\mu^{-}(t, i)$ in \eqref{eq:mu-gamma12}, we obtain 
\begin{equation}\label{eq:min_mu}
\begin{aligned}
\min _{a \in \mathbb{R}_{+}^m} & \left\{\frac{1}{2} a^{\top} \sigma(t, i) \sigma(t, i)^{\top} a+(x-w) a^{\top} \sigma(t, i) \rho(t, i)\right\} \\
= & (x-w)^2\left\{\frac{1}{2}\mu^{-}(t, i)^{\top} \sigma(t, i) \sigma(t, i)^{\top} \mu^{-}(t, i)-\mu^{-}(t, i)^{\top} \sigma(t, i) \rho(t, i)\right\} .
\end{aligned}
\end{equation}
Substituting \eqref{eq:min_mu} and $\Sigma(t, x, i) = \Psi(t, i)(x-w)^2$ into \eqref{eq:optimal-HJB-equation}, yields
$$\begin{aligned}
\text{LHS}&=(x-w)^2  \bigg\{\frac{\partial }{\partial t}\tilde{P}^{-}(t, i)+\sum_{j=1}^L q_{i j} \tilde{P}^{-}(t, j) +\tilde{P}^{-}(t, i)\Big[\mu^{-}(t, i)^{\top} \sigma(t, i) \sigma(t, i)^{\top} \mu^{-}(t, i)\\
&\quad -2\mu^{-}(t, i)^{\top} \sigma(t, i) \rho(t, i) +\operatorname{tr}(\Psi(t, i) \sigma(t, i) \sigma(t, i)^{\top})\Big]\bigg\}\\
&=0.
\end{aligned}$$
where the last equality follows from the coupled linear ODE system \eqref{eq:tilde_P_in_region1}. An analogous argument applies to the interior of $\Gamma_{+}$. Hence, $\tilde{V}(t, x, i; w)$ satisfies \eqref{eq:optimal-HJB-equation} classically in the interiors of $\Gamma_{-}$ and $\Gamma_{+}$, respectively. Moreover, the terminal conditions $\tilde{P}^{-}(T, i)=1$ and $\tilde{P}^{+}(T, i)=1$ ensure that
$$
\tilde{V}(T, x, i ; w)=(x-w)^2-(w-z)^2 .
$$
                
\noindent{\it 2. Regularity and viscosity properties at the interface.} To ensure $\tilde{V}(t, x, i; w)$ is a global solution, we examine the interface $\Gamma_0$.
\begin{itemize}
\item Continuity: By the definition of $\tilde{V}(t, x, i ; w)$ in \eqref{eq:optimal_value_function},
$$
\lim _{x \uparrow w} \tilde{V}(t, x, i ; w)=\lim _{x \downarrow w} \tilde{V}(t, x, i ; w)=-(w-z)^2 .
$$
Thus, $\tilde{V}(t, x, i; w)$ is continuous across the entire domain.
\item Smoothness: The first-order spatial derivative of $\tilde{V}(t, x, i ; w)$ is given by
$$
\frac{\partial}{\partial x} \tilde{V}(t, x, i ; w)= \begin{cases}2 \tilde{P}^{-}(t, i)(x-w), & x-w<0, \\ 2 \tilde{P}^{+}(t, i)(x-w), & x - w \geq 0.\end{cases}
$$ 
Taking the limit as $x \to w$, we find that $\lim\limits_{x \uparrow w} \frac{\partial }{\partial x}\tilde{V}(t, x, i; w)= \lim\limits_{x \downarrow w} \frac{\partial }{\partial x}\tilde{V}(t, x, i; w) = 0$.
\end{itemize}

Since the time derivative also tends to zero at $x=w$,$\tilde{V}(t, x, i ; w)$ is continuously differentiable across $\Gamma_0$ under the stated temporal regularity conditions. However, its second-order spatial derivatives satisfy
$$
\lim _{x \uparrow w} \frac{\partial^2 }{\partial x^2}\tilde{V}(t, x, i ; w)=2 \tilde{P}^{-}(t, i), \quad \lim _{x \downarrow w} \frac{\partial^2}{\partial x^2}\tilde{V}(t, x, i ; w)=2 \tilde{P}^{+}(t, i),
$$
which need not coincide. Thus, $\tilde{V}(t, x, i ; w)$ is not necessarily a globally $C^{1,2}$ function.

It remains to verify the viscosity conditions at $\Gamma_0$. We consider the backward Bellman formulation of \eqref{eq:optimal-HJB-equation}. Let a smooth test function $\varphi$ touch the $i$-th component of $\tilde{V}(t, x, i ; w)$ from above at an interior point $(t_0, w)$. Since $\tilde{V}(t, w, j)=-(w-z)^2$ for every $j \in \mathcal{M}$, we have
\begin{equation}\label{eq:identities}
\varphi_t(t_0, w)=0, \quad \varphi_x(t_0, w)=0, \quad \sum_{j=1}^L q_{i j} \tilde{V}(t_0, w, j)=0 .
\end{equation}
Moreover, the upper contact implies
$$
\varphi_{x x}(t_0, w) \geq 2 \max \left\{\tilde{P}^{-}(t_0, i), \tilde{P}^{+}(t_0, i)\right\}>0 .
$$
Since $\Sigma(t_0, w, i)=0$, the backward Bellman operator evaluated at this contact point reduces to
$$
\sup _{a \in \mathbb{R}_{+}^m}\left\{-\frac{1}{2} \varphi_{x x}(t_0, w) a^{\top} \sigma(t_0, i) \sigma(t_0, i)^{\top} a\right\}=0,
$$
where the supremum is attained at $a=0$. Hence, the viscosity subsolution condition holds.

Next, suppose that $\varphi$ touches the $i$-th component of $\tilde{V}(t, x, i ; w)$ from below at $(t_0, w)$, the same identities in \eqref{eq:identities} hold. In this case, the lower contact implies
$$
\varphi_{x x}\left(t_0, w\right) \leq 2 \min \left\{\tilde{P}^{-}\left(t_0, i\right), \tilde{P}^{+}\left(t_0, i\right)\right\} .
$$
To account for the unbounded control set, let $K_R=\left\{a \in \mathbb{R}_{+}^m:\|a\| \leq R\right\}$, where $R>0$. Since $0 \in K_R$,
$$
\sup _{a \in K_R}\left\{-\frac{1}{2} \varphi_{x x}(t_0, w) a^{\top} \sigma(t_0, i) \sigma(t_0, i)^{\top} a\right\} \geq 0 .
$$
Thus, the viscosity supersolution condition also holds. Together with the local HJB verification in Step 1 and the terminal condition, this establishes the viscosity solution property of $\tilde{V}(t, x, i ; w)$ under the standard backward Bellman convention.

\noindent{\it 3. Verification of optimality.} 
Suppose that $U(t, x, i ; w)$ is another viscosity solution of \eqref{eq:optimal-HJB-equation} equation with the same terminal condition and of the form
$$
U(t, x, i ; w)= \begin{cases}\widehat{P}^{-}(t, i)(x-w)^2-(w-z)^2, & x-w<0, \\ \widehat{P}^{+}(t, i)(x-w)^2-(w-z)^2, & x -w\geq 0,\end{cases}
$$
where $\widehat{P}^{ \pm}(\cdot, i) \in C^1([0, T])$ and $\widehat{P}^{ \pm}(t, i)>0$. Since $U$ is $C^{1,2}$ in the interior of each region, it can be used as a test function touching itself from both above and below. Moreover, the positivity of $\widehat{P}^{ \pm}(t, i)$ ensures that the Hamiltonian is finite. Hence, the viscosity subsolution and supersolution conditions imply that $U$ satisfies the HJB equation \eqref{eq:optimal-HJB-equation} pointwise in each region. By Lemma \ref{lem:scaling-lemma}, the corresponding Hamiltonian minimizers are given by \eqref{eq:optimal-control}, independently of $\widehat{P}^{ \pm}(t, i)$. Substituting the piecewise quadratic form of $U$ into the HJB equation, we obtain the same coupled linear ODE systems for $\widehat{P}^{ \pm}(t, i)$ as those satisfied by $\tilde{P}^{ \pm}(t, i)$, with the terminal conditions $\widehat{P}^{ \pm}(T, i)=1$.

Let $D^{ \pm}(t, i):=\widehat{P}^{ \pm}(t, i)-\tilde{P}^{\pm}(t, i)$.
Subtracting the corresponding ODE systems yields
$$
\left\{\begin{array}{l}
\frac{\partial D^{ \pm}}{\partial t}(t, i)+\sum_{j=1}^L q_{i j} D^{ \pm}(t, j)+\kappa^{ \pm}(t, i) D^{ \pm}(t, i)=0, \\
D^{ \pm}(T, i)=0, \quad i \in \mathcal{M},
\end{array}\right.
$$
where
$$\begin{aligned}
\kappa^{-}(t, i)= & \mu^{-}(t, i)^{\top} \sigma(t, i) \sigma(t, i)^{\top} \mu^{-}(t, i) \\
& -2\mu^{-}(t, i)^{\top} \sigma(t, i) \rho(t, i)+\operatorname{tr}\left\{\Psi(t, i) \sigma(t, i) \sigma(t, i)^{\top}\right\}, \\
\kappa^{+}(t, i)= & \mu^{+}(t, i)^{\top} \sigma(t, i) \sigma(t, i)^{\top} \mu^{+}(t, i) \\
& +2\mu^{+}(t, i)^{\top} \sigma(t, i) \rho(t, i)+\operatorname{tr}\left\{\Psi(t, i) \sigma(t, i) \sigma(t, i)^{\top}\right\} .
\end{aligned}$$
By the uniqueness of solutions to finite-dimensional linear ODE systems with the prescribed terminal conditions,
$D^{ \pm}(t, i)=0 .$
Hence,
$\widehat{P}^{ \pm}(t, i)=\tilde{P}^{ \pm}(t, i),$
which implies
$U(t, x, i ; w)=\tilde{V}(t, x, i ; w)$
for all $(t, x, i) \in[0, T] \times \mathbb{R} \times \mathcal{M}$. Together with the viscosity solution property established in Steps 1–2, this proves that $\tilde{V}$ is the unique viscosity solution of \eqref{eq:optimal-HJB-equation}.
\end{proof}

\begin{remark}
Theorem \ref{thm:state-dependent} provides several important insights into the auxiliary exploratory problem. First, when $\Sigma \equiv 0$, the additional diffusion term vanishes, and \eqref{eq:optimal-HJB-equation} formally reduces to the HJB equation associated with the original constrained mean-variance problem studied by \citet{hu2022constrained}. More importantly, the mean $a^*$ of the optimal Gaussian policy, given by \eqref{eq:optimal-control}, coincides exactly with the optimal control of the corresponding target problem. In particular, the choice of $\Psi(t, i)$ does not affect the optimal mean control, but determines the exploration variance and influences the resulting value function. Furthermore, under state-dependent exploration, both the mean and covariance of $\pi^*(t, x, i ; w)$ vanish as $x \rightarrow w$. Consequently, $x=w$ is an absorbing state for the optimally controlled wealth process. Under the usual well-posedness conditions, the continuity of wealth trajectories and absorption at $x=w$ also prevent state trajectory from crossing the interface between $\Gamma_{-}$and $\Gamma_{+}$.
\end{remark}

Finally, we determine the optimal Lagrange multiplier $w^*$. The optimal cost functional evaluated at $t=0$ is
\begin{equation*}
J(w; x_0, i)= \begin{cases}
\tilde{P}^{-}(0, i)(w-x_0)^2-(w-z)^2, & \text { if } w-x_0 >0, \\
\tilde{P}^{+}(0, i)(w-x_0)^2-(w-z)^2, & \text { if } w -x_0 \leq 0.\end{cases}
\end{equation*}
Under the assumption $z > x_0$, we focus on the domain $w > x_0$. By the first-order condition, we obtain
\begin{equation}\label{eq:optimal-Lagrange-multiplier}
w^* = \operatorname*{argmax}_{w \in \mathbb{R}} J(u^*(\cdot; w); w, i) = \frac{z - x_0 \tilde{P}^{-}(0, i)}{1 - \tilde{P}^{-}(0, i)} = \frac{1-\frac{x_0}{z} \tilde{P}^{-}(0, i)}{1-\tilde{P}^{-}(0, i)} z.
\end{equation}
Given that the solution to the ODE system satisfies $0 <\tilde{P}^{-}(0, i) < 1$ and $z > x_0$, we have $w^* > z > x_0$. This confirms that the optimal wealth process remains in the region $\Gamma_-$ almost surely, thereby justifying the use of $\tilde{P}^{-}$ in the final expression of the optimal policy.

\section{A Continuous-Time RL Algorithm.}\label{sec4}
    
    In this section, we develop a continuous-time reinforcement learning algorithm to learn the optimal control of the target problem \eqref{eq:optimal-control-problem}. Based on the theoretical results presented in the previous section, \eqref{eq:constrained-optimization-problem} is equivalent to finding the optimal control of the auxiliary problem \eqref{eq:auxiliary-problem}. Then, we define the martingale loss function and update our policy within the feasible set using stochastic gradient descent.

	\subsection{Function Parameterization.}\label{subsection_paramtrization}
    
    This subsection develops the function parameterizations used in the proposed learning algorithm. We first characterize the continuous-time structure of the value function and policy, then introduce their piecewise parameterizations on a discrete time grid for numerical implementation.
    
    {\it Continuous-Time Parameterization.} We use a state-dependent variance function $\Sigma(t, x, i)=\Psi_0(x-w)^2$, where $\Psi_0\in \mathbb{S}_{++}^n$ is a fixed matrix. We introduce the following vector functions $\boldsymbol{\phi}^{-}(t, i)$, $\boldsymbol{\phi}^{+}(t, i): [0, T] \times \mathcal{M} \to \mathbb{R}_{+}^m$. Then the policy distribution can be parameterized as:
    \begin{equation}\label{eq:pi-continuous}
        \pi^\phi(u \mid t, x, i) = \mathcal{N}\left( u^\phi(t, x, i; w), \Psi_0(x-w)^2 \right),
    \end{equation}
    where the mean function $u^\phi$ is defined by
    \begin{equation*}\label{eq:mean-control-function}
        u^\phi(t, x, i; w) = \begin{cases}
        -(x-w)\boldsymbol{\phi}^{-}(t, i), & \text{if } x-w < 0, \\ 
        (x-w)\boldsymbol{\phi}^{+}(t, i), & \text{if } x-w \geq 0 .
         \end{cases}
    \end{equation*}
    We also introduce two scalar functions $\theta^{-}(t, i), \theta^{+}(t, i): [0, T] \times \mathcal{M} \to \mathbb{R}$. The value function of $\pi^\phi$ is defined by
    \begin{equation}\label{eq:value-function-expection}
        V^\theta(t, x, i; w) = \mathbb{E} \big[ (x_T^\pi-w)^2 - (w-z)^2 \mid x_t^\pi=x, \alpha_t=i \big].
    \end{equation}
    The piecewise-linear policy \eqref{eq:pi-continuous} provides an explicit parameterization, which motivates the following parameterization of the value function. By the Feynman-Kac formula, the value function \eqref{eq:value-function-expection} satisfies the following linear PDE
    \begin{equation*}\label{eq:parameterize-HJB-equation}
        \left\{\begin{array}{l}
             \frac{\partial}{\partial t} v(t, x, i) 
            + \frac{1}{2} \frac{\partial^2}{\partial x^2} v(t, x, i) u^\phi(t, x, i; w)^{\top} \sigma(t, i) \sigma(t, i)^{\top} u^\phi(t, x, i; w) + \sum\limits_{j=1}^L q_{ij} v(t, x, j)\\
            \,\,+\frac{1}{2} \frac{\partial^2}{\partial x^2} v(t, x, i) \operatorname{tr}\{\Psi_0(x-w)^2 \sigma(t, i) \sigma(t, i)^{\top}\} + \frac{\partial}{\partial x} v(t, x, i) u^\phi(t, x, i; w)^{\top} \sigma(t, i) \rho(t, i)  = 0, \\
            v(T, x, i) = (x-w)^2 - (w-z)^2, \quad \forall i \in \mathcal{M}.
        \end{array}\right.
    \end{equation*}
    Solving it yields 
    \begin{equation}\label{eq:V-continuous}
        V^\theta(t, x, i; w) = \begin{cases}
            \exp(\theta^{-}(t, i))(x-w)^2 - (w-z)^2, & \text{if } x-w < 0, \\
            \exp(\theta^{+}(t, i))(x-w)^2 - (w-z)^2, & \text{if } x-w > 0, \\
            - (w-z)^2, & \text{if } x-w = 0,
        \end{cases}
    \end{equation}
    where the functions $\theta^{-}(t, i)$ and $\theta^{+}(t, i)$ are uniquely determined by
    \begin{equation*}
        \begin{aligned}
            \exp(\theta^{-}(t, i)) &= \mathbb{E}\bigg[\exp \bigg( \int_t^T \big[ (\boldsymbol{\phi}^{-}(s, \alpha_s))^{\top} \sigma_s \sigma_s^{\top} \boldsymbol{\phi}^{-}(s, \alpha_s) - 2(\sigma_s \rho_s)^{\top} \boldsymbol{\phi}^{-}(s, \alpha_s) + \operatorname{tr}\{\Psi_s \sigma_s \sigma_s^{\top}\} \big] d s \bigg) \,\bigg|\, \alpha_t=i \bigg],\\
            \exp(\theta^{+}(t, i)) &= \mathbb{E}\bigg[\exp \bigg( \int_t^T \big[ (\boldsymbol{\phi}^{+}(s, \alpha_s))^{\top} \sigma_s \sigma_s^{\top} \boldsymbol{\phi}^{+}(s, \alpha_s) + 2(\sigma_s \rho_s)^{\top} \boldsymbol{\phi}^{+}(s, \alpha_s) + \operatorname{tr}\{\Psi_s \sigma_s \sigma_s^{\top}\} \big] d s \bigg) \,\bigg|\, \alpha_t=i \bigg].
        \end{aligned}
    \end{equation*}
 
    {\it Temporal Discretization and Piecewise Parameterization}. 
    We partition the time horizon $[0, T]$ into $N$ equidistant sub-intervals with a step size $\Delta t = T/N$, yielding discrete time nodes $t_n = n\Delta t$ for $n \in \{0, \dots, N\}$. Within this discretized framework, we employ a piecewise-constant approximation. We assume that the continuous functions remain time-invariant within each infinitesimal sub-interval $[t_n, t_{n+1})$. Consequently, 
    for each regime $i \in \mathcal{M}$ and time interval $[t_n, t_{n+1}), n \in \{0, \dots, N-1\}$, we can define the parameter sets
    \begin{equation*}
        \boldsymbol{\Theta}^\phi := \{ \boldsymbol{\phi}_{i,n}^{-}, \boldsymbol{\phi}_{i,n}^{+} \}, \quad \boldsymbol{\Theta}^\theta := \{ \theta_{i,n}^{-}, \theta_{i,n}^{+} \},
    \end{equation*}
    where $\boldsymbol{\phi}_{i, n}^{ \pm}:=\boldsymbol{\phi}^{ \pm}\left(t_n, i\right) \in \mathbb{R}_{+}^m$ and $\theta_{i, n}^{ \pm}:=\theta^{ \pm}\left(t_n, i\right) \in \mathbb{R}$, with the superscript $\pm$ indicating both the $+$ and $-$ cases.
    To ensure the value function at the terminal time is consistent with the target cost, i.e., $V^\theta(T, x, \cdot) = (x-w)^2 - (w-z)^2$, we enforce the boundary constraint at the final grid node: $\theta_{i,N}^{-} = \theta_{i,N}^{+} = 0$, $\forall i \in \mathcal{M}$.

	\subsection{Policy Iteration.}
	In this section, we derive a continuous-time reinforcement learning framework without market parameters, so that our algorithm can learn directly from trajectory samples.

    {\it The Definition of Martingale Loss.} To estimate the value function $V(t, x, i; w)$, we employ the martingale loss approach proposed in \citet{jia2021policy}. The martingale loss $L(\theta)$ is defined as:
    \begin{equation*}
        \begin{aligned}
            L(\theta) :&= \mathbb{E} \left[ \int_0^T \left( V^\theta(t, X_t, \alpha_t; w) - V^\phi(T, X_T, \alpha_T ; w)\right)^2 dt \right] \\
            &= \mathbb{E} \left[ \int_0^T \left( V^\theta(t, X_t, \alpha_t; w) - \left[ (X_T-w)^2 - (w-z)^2 \right] \right)^2 dt \right].
        \end{aligned}
    \end{equation*}
    Assuming that the parameters $\{\theta_{i,n}^{-}\}$ and $\{\theta_{i,n}^{+}\}$ are constant within each time interval $[t_n, t_{n+1})$, the partial derivatives of $L(\theta)$ with respect to the parameters are derived as follows:\footnote{
    For the Critic parameters, the gradient is updated using the terminal state $X_T$ of the entire trajectory, which eliminates bootstrapping bias and ensures that the parameterized model converges to the true value function of the current policy:
    $$\begin{aligned}
        &\nabla_{\theta_{i,n}^{\pm}} L(\theta) = \mathbb{E}\left[\int_0^T 2\left(V^\theta(t, X_t, \alpha_t ; w)-V(T, X_T, \alpha_T ; w)\right) \frac{\partial V^\theta(t, X_t, \alpha_t ; w)}{\partial \theta_{i, n}^{\pm}} d t\right]\\
        =& 2 \mathbb{E} \left[ \int_0^T \left( \exp(\theta_{i,n}^{\pm})(X_t-w)^2 - (X_T-w)^2 \right) \cdot \exp(\theta_{i,n}^{\pm})(X_t-w)^2 \cdot \mathbb{I}_{\{t \in[t_n, t_{n+1})\}} \cdot \mathbb{I}_{\{\alpha_t=i, X_t-w < 0\}}dt \right],
    \end{aligned}$$
    where $\mathbb{I}$ denotes the indicator function.}
    $$\begin{aligned}
        \frac{\partial L(\theta)}{\partial \theta_{i, n}^{-}} &=
        2\Delta t \mathbb{E} \bigg[\mathbb{I}_{\{\alpha_n=i, x_n<w\}}\big(\exp(\theta_{i, n}^{-})(x_n-w)^2 - (x_N-w)^2\big) \cdot \exp(\theta_{i,n}^{-})(x_n-w)^2 \bigg],  \\
        \frac{\partial L(\theta)}{\partial \theta_{i, n}^{+}} &= 2\Delta t \mathbb{E} \bigg[\mathbb{I}_{\{\alpha_n=i, x_n\geq w\}}\big(\exp(\theta_{i, n}^{+})(x_n-w)^2 - (x_N-w)^2\big) \cdot \exp(\theta_{i,n}^{+})(x_n-w)^2 \bigg].
    \end{aligned}$$
   
    {\it Parameter Iteration and Update.} We use the mini-batch stochastic gradient descent (SGD) method to optimize the martingale loss. Let $\mathcal{B}$ denote a mini-batch of trajectories. At each time step $t_n$, let $\mathcal{B}_i$ denote the subset of trajectories for which the market regime is $\alpha_{t_n} = i$. We further partition $\mathcal{B}_i$ into two disjoint subsets based on the wealth regions:
    $$
      \mathcal{B}_{i,n}^{-} := \{ m \in \mathcal{B}_i \mid x_n^{(m)} < w \}, \quad \mathcal{B}_{i,n}^{+} := \{ m \in \mathcal{B}_i \mid x_n^{(m)} \geq w \}.
    $$
    The empirical gradients for the parameters at time step $t_n$ are calculated by
    $$\begin{aligned}
        \frac{\partial L(\theta)}{\partial \theta_{i, n}^{-}} &= \frac{1}{|\mathcal{B}_{i,n}^{-}|} \sum_{m \in \mathcal{B}_{i,n}^{-}} \left[ 2\left(\exp(\theta_{i, n}^{-})(x_n^{(m)}-w)^2 - (x_N^{(m)}-w)^2\right)\cdot \exp(\theta_{i, n}^{-})(x_n^{(m)}-w)^2 \Delta t \right], \\
        \frac{\partial L(\theta)}{\partial \theta_{i, n}^{+}} &= \frac{1}{|\mathcal{B}_{i,n}^{+}|} \sum_{m \in \mathcal{B}_{i,n}^{+}} \left[ 2\left(\exp(\theta_{i, n}^{+})(x_n^{(m)}-w)^2 - (x_N^{(m)}-w)^2\right)\cdot \exp(\theta_{i, n}^{+})(x_n^{(m)}-w)^2 \Delta t \right].
    \end{aligned}$$
    And we divide by the total number of regime-matching trajectories $|\mathcal{B}_{i,n}^{-}|$ to maintain an unbiased estimate of the conditional expectation over the batch. Then the parameters are updated iteratively via 
    \begin{equation}\label{eq:MSE-theta-updated}
        \begin{aligned}
            \theta_{i, n}^{-} \leftarrow \theta_{i, n}^{-} - \eta_\theta \frac{\partial L(\theta)}{\partial \theta_{i, n}^{-}},\quad
            \theta_{i, n}^{+} \leftarrow \theta_{i, n}^{+} - \eta_\theta \frac{\partial L(\theta)}{\partial \theta_{i, n}^{+}}.
        \end{aligned}
    \end{equation}
    where $\eta_\theta > 0$ is the learning rate.

\subsection{Policy Gradient Optimization.}

    Let $\pi^\phi$ be the current parameterized policy. Let $(x(\cdot), \alpha(\cdot), u(\cdot))$ be the state-regime-action trajectory of \eqref{eq:wealth-process} generated under the randomized policy $\pi^\phi(\cdot)$. According to the continuous-time policy gradient theorem, the gradient of the objective function with respect to the policy parameters can be represented using the stochastic TD error:
    $$
        \frac{\partial}{\partial \phi} \tilde{J}(\pi^\phi(\cdot) ; w) = \mathbb{E} \left[\int_0^T \left( \frac{\partial}{\partial \phi} \log \pi^\phi(u_t \mid t, x_t, \alpha_t) \right) d V^\phi(t, x_t, \alpha_t ; w)\right].
    $$
    The partial derivatives of $\tilde{J}(\pi^\phi(\cdot) ; w)$ with respect to the parameters are derived as follows:
    $$\begin{aligned}
        & \frac{\partial}{\partial \phi_{i,n}^{-}} \tilde{J}\left(\pi^\phi(\cdot) ; w\right) \approx \mathbb{E}\left[ \mathbb{I}_{\{\alpha_n=i, x_n<w\}} \big\{-\Psi_0^{-1} \left( \frac{u_n}{x_n-w} + \phi_{i,n}^{-} \right)\big\} \delta_n^\phi \right],  \\
        & \frac{\partial}{\partial \phi_{i,n}^{+}} \tilde{J}\left(\pi^\phi(\cdot) ; w\right) \approx \mathbb{E}\left[ \mathbb{I}_{\{\alpha_n=i, x_n\geq w\}} \big\{ \Psi_0^{-1} \left( \frac{u_n}{x_n-w} + \phi_{i,n}^{+} \right)\big\} \delta_n^\phi \right],
    \end{aligned}$$
    where $\delta_n^\phi = V^\phi\left(t_{n+1}, x_{n+1}, \alpha_{n+1}; w\right) - V^\phi\left(t_n, x_n, \alpha_n; w\right)$ is the empirical TD error evaluated by the Critic network. In the Actor–Critic implementation, the increment of the true policy function, $dV^\phi$, is replaced by the corresponding increment of the parameterized Critic, $V^\theta$. 
    Given the discretized trajectory $\{(t_n, x_n, \alpha_n, u_n)\}_{n=0}^{N-1}$, the empirical gradients for the parameters at time step $t_n$ are calculated by:
    $$\begin{aligned}
        \frac{\partial}{\partial \phi_{i,n}^{-}} \tilde{J}\left(\pi^\phi(\cdot) ; w\right) &\approx \frac{1}{|\mathcal{B}_{i,n}^{-}|} \sum_{m \in \mathcal{B}_{i,n}^{-}} \left[ -\Psi_0^{-1} \left( \frac{u_n^{(m)}}{x_n^{(m)}-w} + \phi_{i,n}^{-} \right) \delta_{m,n}^{\theta^-} \right], \\
        \frac{\partial}{\partial \phi_{i,n}^{+}} \tilde{J}\left(\pi^\phi(\cdot) ; w\right) &\approx \frac{1}{|\mathcal{B}_{i,n}^{+}|} \sum_{m \in \mathcal{B}_{i,n}^{+}} \left[\Psi_0^{-1} \left( \frac{u_n^{(m)}}{x_n^{(m)}-w} + \phi_{i,n}^{+} \right) \delta_{m,n}^{\theta^+} \right],
    \end{aligned}$$
    where $\delta_{m, n}^\theta=V^\theta(t_{n+1}, x_{n+1}^{(m)}, \alpha_{n+1}^{(m)} ; w)-V^\theta(t_n, x_n^{(m)}, \alpha_n^{(m)} ; w)$.
    And update parameters $\{\phi_{i, n}^{-}\}$ and $\{\phi_{i, n}^{+}\}$ by:
   \begin{equation}\label{eq:MSE-phi-update}
        \begin{aligned}
            \phi_{i, n}^{-} \leftarrow \max\bigg\{0, \phi_{i, n}^{-}-\eta_\phi \frac{\partial}{\partial \phi_{i,n}^{-}} \tilde{J}\left(\pi^\phi(\cdot) ; w\right)\bigg\},\\
            \phi_{i, n}^{+} \leftarrow \max\bigg\{0, \phi_{i, n}^{+}-\eta_\phi \frac{\partial}{\partial \phi_{i,n}^{+}} \tilde{J}\left(\pi^\phi(\cdot) ; w\right)\bigg\}.
        \end{aligned}
    \end{equation}
    
    Considering the nonnegative integral constraint that requires $\pi^\phi$ to have a non-negative mean, we ensure $\phi_{i,n}^{-} \geq 0$ and $\phi_{i,n}^{+} \geq 0$ by projecting them back to the feasible set $\mathbb{R}_{+}^m$ after each gradient descent step. This is essentially the proximal gradient method for constrained optimization problems \citep{beck2017First}. In our framework, the proximal gradient method is guaranteed to converge because the exponentiated cost functional in \eqref{eq:V-continuous} is convex with respect to the trainable parameters $\phi$.

    Finally, we design the update rule for the moving reference target $w$ to dynamically estimate the optimal multiplier $w^*$. Notice that $w^*$ is the unique root of the terminal wealth expectation condition:
    \begin{equation*}\label{eq:w-update-condition}
        \mathbb{E}[x^*(T;w)] - z = 0,
    \end{equation*}
    where $x^*(T;w)$ is the optimal terminal state of the auxiliary problem \eqref{eq:auxiliary-problem}, which shares the same mean path as the target problem \eqref{eq:optimal-control-problem}. Consequently, we employ the Robbins-Monro stochastic approximation method to iteratively update $w$ at the end of each training episode $n_w$:
    \begin{equation*}\label{eq:w-update-rule}
        w \leftarrow w - \eta_w \left( \frac{1}{M} \sum_{m=1}^M x^{(m)}(T; w) - z \right),
    \end{equation*}
    where $\eta_w > 0$ is the learning rate for the multiplier, and $\frac{1}{M} \sum_{m=1}^M x^{(m)}(T; w)$ is the Monte Carlo empirical mean of the terminal wealth evaluated over $M$ sampled trajectories driven by the current policy $\pi^\phi$.
    
	The analysis above leads to Algorithm \ref{algo_ba2c_modified} for the continuous-time MV portfolio selection problem with a no-shorting constraint for a Markov regime-switching system.
    
    \begin{algorithm}
    \begin{algorithmic}[1]
	\caption{Continuous-Time Portfolio Optimization under Regime-Switching}\label{algo_ba2c_modified}

    	\State \textbf{Input:} Initial parameters $\bm{\Theta}^{\theta}_0$ and $\bm{\Theta}^{\phi}_0$; learning rates $\eta_\theta$, $\eta_\phi$, and $\eta_w$; update frequencies $n_\phi$, $n_\theta$, and $n_w$;
        warm-up iterations $n_{\mathrm{warmup}}$; batch size $M$; target discounted expected return $z$ and initial Lagrange multiplier $w_0$; and terminal constraints $\theta_{i,N}^{-}=\theta_{i,N}^{+}=0$ for all $i\in\mathcal{M}$.
    
        \State Initialize $w\gets w_0$.
		
	  \\\hrulefill
		
         \For{$c=1$ \textbf{to} $n_{\mathrm{warmup}}$}
            \State Initialize time $t_0 = 0$, state $x_{t_0}^{(m)} = x_0$, and initial regime $\alpha_{t_0}^{(m)}=i$.
            \State Sample $M$ trajectories by executing initial policy $\pi(\cdot \mid t_0, x_0, i ; \Theta_0^\phi)$, organized as $\mathcal{D}_c:=\{(x_{t_n}^{(m)}, \alpha_{t_n}^{(m)}, u_{t_n}^{(m)}), n=0,1, \ldots, N\}_{m=1}^M$ (the \textbf{current batch}).
            \State Identify trajectory subsets $\mathcal{B}_i$, $\mathcal{B}_{i,n}^{-}$, $\mathcal{B}_{i,n}^{+}$ and update $\{\theta_{i,n}^{-}\}$ and $\{\theta_{i,n}^{+}\}$ via \eqref{eq:MSE-theta-updated}.
        \EndFor
        
        \State Initialize an empty historical terminal wealth buffer $\mathcal{X}$.

		\For{\texttt{$e = 1$ to $n_\phi$}}
            \State Initialize an empty episode data buffer $\mathcal{D}$ (the \textbf{accumulated buffer}).
            
		    \For{\texttt{$c = 1$ to $n_\theta$}}
                \State Sample $M$ trajectories by executing $\pi^\phi$ via \eqref{eq:pi-continuous}, organized as $\mathcal{D}_c$. 
                
                
                \State Identify subsets $\mathcal{B}_i, \mathcal{B}_{i,n}^{-}, \mathcal{B}_{i,n}^{+}$ from $\mathcal{D}_c$, and update $\{\theta_{i,n}^{-}\}$ and $\{\theta_{i,n}^{+}\}$ via \eqref{eq:MSE-theta-updated}.
                \State Append $\mathcal{D}_c$ to buffer $\mathcal{D}$.
	        \EndFor

            \State Identify subsets $\mathcal{B}_i, \mathcal{B}_{i,n}^{-}, \mathcal{B}_{i,n}^{+}$ from  $\mathcal{D}$, and update $\{\phi_{i, n}^{-}\}$ and $\{\phi_{i, n}^{+}\}$ via \eqref{eq:MSE-phi-update}.
            
	        \State Compute average terminal wealth of $\mathcal{D}$: $\bar{x}_T = \frac{1}{n_\theta} \sum_{m=1}^{n_\theta} x_{t_N}^{(m)}$.
            \State Append $\bar{x}_T$ to the historical buffer $\mathcal{X}$.
	        \If{$(e \bmod n_w) = 0$}
                \State Compute $\hat{x}_T$ as the average of the most recent $n_w$ entries in $\mathcal{X}$.
	            \State Update target multiplier: $w \leftarrow w - \eta_w (\hat{x}_T - z)$.
	        \EndIf
		\EndFor

    \\\hrulefill
    \State \textbf{return}
    Optimal parameters for value function $\bm{\Theta}^{\theta}_E$, policy $\bm{\Theta}^{\phi}_E$, and multiplier $w_E$.
    
    \end{algorithmic}
    \end{algorithm}

\section{Numerical Experiments.}\label{sec5}
        
	In this section, we verify the effectiveness of the Algorithm \ref{algo_ba2c_modified} for solving the mean-variance portfolio selection problem with no-shorting constraints under the Markov regime-switching model through numerical experiments. We consider a financial market with two market regimes (i.e., the “bull” and the “bear” regimes, $\mathcal{M}=\{0,1\}$) and three risky assets ($m=3$). The total investment horizon is set to $T=1.0$ with an initial wealth of $x_0=1.0$. and the continuous-time horizon is partitioned into $N=100$ equidistant steps, corresponding to a time step of $\Delta t = T/N = 0.01$. Under the “bull” regime (Regime 0), the risk-free rate is set to $r_0=0.03$, and the expected return vector of the risky assets is set to $b_0=[0.09, 0.15, 0.19]^\top$; under the “bear” regime (Regime 1), the risk-free interest rate is set to $r_1=0.01$, and the expected return vector of risk assets is $b_1=[0.07, 0.13, 0.17]^\top$. The asset volatility matrices are generated by multiplying the preset diagonal volatility matrix by the Cholesky factor corresponding to the prescribed correlation structure. Specifically, for both regimes, the volatility matrix is given by $\sigma_0=\sigma_1 = \operatorname{diag}(0.20, 0.25, 0.35) \Lambda$, where
    $$
    \Lambda = \begin{pmatrix}
    1 & 0 & 0 \\
    0.5 & \sqrt{3}/2 & 0 \\
    0.5 & \sqrt{3}/6 & \sqrt{6}/3
    \end{pmatrix}
    $$
    is the Cholesky factor.

    The regime transition intensities are strictly governed by a generator matrix (Q-matrix), where $q_{12}=1.0$ and $q_{21}=2.0$. For algorithmic implementation and sample generation, we employ the Euler-Maruyama discretization scheme to simulate the evolution of the controlled wealth process, and use transition probabilities derived from the Markov chain's generator matrix to simulate regime transitions. For each batch, we independently simulate $M=64$ wealth trajectories. At each decision time, actions are sampled from the parameterized policy distribution $\pi^\phi(u \mid t, x, i)$ defined in \eqref{eq:pi-continuous}, with the exploration variance constant set to $\Psi_0=0.2$. 

    The theoretical solution of the target problem \eqref{eq:optimal-control-problem} specializes to
    $$u^*(t, x, i)=\left\{\begin{array}{ll}
    -\phi_i^*\left(x-w^*\right), & x<w^*, \\
    0, & x \geq w^*,
    \end{array} \quad i \in \mathcal{M}=\{0,1\},\right.
    $$
    where $\phi_i^* \in \mathbb{R}_{+}^3$ denotes the optimal policy coefficient for regime $i$, and $w^* \in \mathbb{R}$ is the optimal Lagrange multiplier defined in \eqref{eq:optimal-Lagrange-multiplier}. Define
    $c_i\left(\phi_i\right)=\phi_i^{\top} \sigma_i \sigma_i^{\top} \phi_i-2\left(\sigma_i \rho_i\right)^{\top} \phi_i$, $i \in \mathcal{M}$, and $C(\phi)=\operatorname{diag}\left(c_0\left(\phi_0\right), c_1\left(\phi_1\right)\right)$, $\phi=\left(\phi_0, \phi_1\right)$. Then the pair $\left(\phi^*, w^*\right)$ can be computed by solving
    $$
    \max _{w \in \mathbb{R}^{\prime}} \min _{\phi_0, \phi_1 \in \mathbb{R}_{+}^3} J(\boldsymbol{\phi}, w):=\left[\nu_0^{\top} e^{[Q+C(\boldsymbol{\phi})] T} \mathbf{1}\right]\left(x_0-w\right)^2-(w-z)^2,
    $$
    where $\nu_0$ denotes the initial distribution over market regimes. In our numerical implementation, the initial regime is sampled uniformly from the two market states, so that
    $\nu_0=\left(\frac{1}{2}, \frac{1}{2}\right)^{\top}$. The inner minimization is separable across regimes. Hence, for each $i \in \mathcal{M}$,
    $$
    \phi_i^*=\arg \min _{\phi_i \in \mathbb{R}_{+}^3}\left\{\phi_i^{\top} \sigma_i \sigma_i^{\top} \phi_i-2\left(\sigma_i \rho_i\right)^{\top} \phi_i\right\} .
    $$
    
    For parameter settings and initialization, the target expected terminal wealth is set to $z \in \{1.05, 1.25, 1.40, 2.00\}$ to comprehensively evaluate the algorithm's performance across diverse return preferences, including extreme return targets. The initial value of the Lagrange multiplier is set to $w=z+0.07$. 
    The policy parameters $\phi$ of the Actor network are uniformly and randomly initialized in the interval $[0.01, 0.05]$, whereas the value evaluation parameters $\theta$ of the Critic network are initialized to zero. 
    The whole training process encompasses a warm-up phase and a multi-timescale alternating optimization phase. The number of warm-up iterations for the Critic network is set to 50. In the subsequent main loop, the outer Actor optimization is run for 4500 episodes. Within each episode, the Critic network undergoes 5 inner iterations to ensure precise value evaluation. The macroscopic multiplier $w$ is updated every 15 episodes using the stochastic approximation (Robbins-Monro) method based on the clipped empirical mean of the terminal wealth. The learning rates for the Critic, Actor, and multiplier are set to $\eta_\theta=0.005$, $\eta_\phi=0.5$ (with a linear decay strategy to stabilize convergence in later stages), and $\eta_w=0.05$, respectively.

    Results for different expected returns. Figures \ref{fig:target_return_z1.05}-\ref{fig:target_return_z2.00} show the iterates of $\phi, w, \mathbb{E}\left[x_T\right]$, and $\operatorname{Var}\left(x_T\right)$ for $z=1.05, 1.25, 1.40$, and $2.00$, respectively. Table \ref{tab:final_estimates_transposed} reports the final estimates. These results demonstrate that our algorithm converges to the ground-truth values without knowledge of the model parameters. In particular, it learns the optimal values of the terminal mean and variance very well.

    \begin{table}[htbp]
    \caption{Final estimates of regime-switching parameters for different expected returns.}
    \label{tab:final_estimates_transposed}
    \centering
    \resizebox{\textwidth}{!}{ 

    \begin{tabular}{lcccc} 
        \toprule
        & \multicolumn{4}{c}{Expected Return ($z$)} \\
        \cmidrule(lr){2-5} 
        Parameter & $1.05$ & $1.25$ & $1.40$ & $2.00$ \\
        
        \midrule
        \addlinespace 

        $\phi^*$ (Regime 0) & (0.000, 1.341, 0.827) & (0.000, 1.341, 0.827) & (0.000, 1.341, 0.827) & (0.000, 1.341, 0.827) \\
        [0.5ex]
        $\phi$ (Regime 0) & (0.134, 1.237, 0.736) & (0.067, 1.189, 0.753) & (0.098, 1.220, 0.698) & (0.135, 1.347, 0.624) \\
        $\phi^*$ (Regime 1) & (0.000, 1.341, 0.827) & (0.000, 1.341, 0.827) & (0.000, 1.341, 0.827) & (0.000, 1.341, 0.827) \\ 
        $\phi$ (Regime 1) & (0.272, 1.340, 0.767) & (0.333, 1.390, 0.682) & (0.350, 1.363, 0.693) & (0.366, 1.252, 0.833) \\
        
        \addlinespace
        
        $w^*$ & 1.197 & 1.984 & 2.574 & 4.934 \\
        $w$ & 1.195 & 1.976 & 2.570 & 4.831 \\
        $\mathbb{E}[x_T^*]$ & 1.050 & 1.250 & 1.400 & 2.000 \\
        $\mathbb{E}[x_T]$ & 1.047 & 1.238 & 1.381 & 1.964 \\
        $Var[x_T^*]$ & 0.007 & 0.183 & 0.469 & 2.934 \\
        $Var[x_T]$ & 0.010 & 0.242 & 0.607 & 3.819 \\
        
        \bottomrule
     \end{tabular}
     } 
     \end{table}

    \begin{figure}[htbp]
        \centering
        \includegraphics[width=\textwidth, height=0.92\textheight, keepaspectratio]{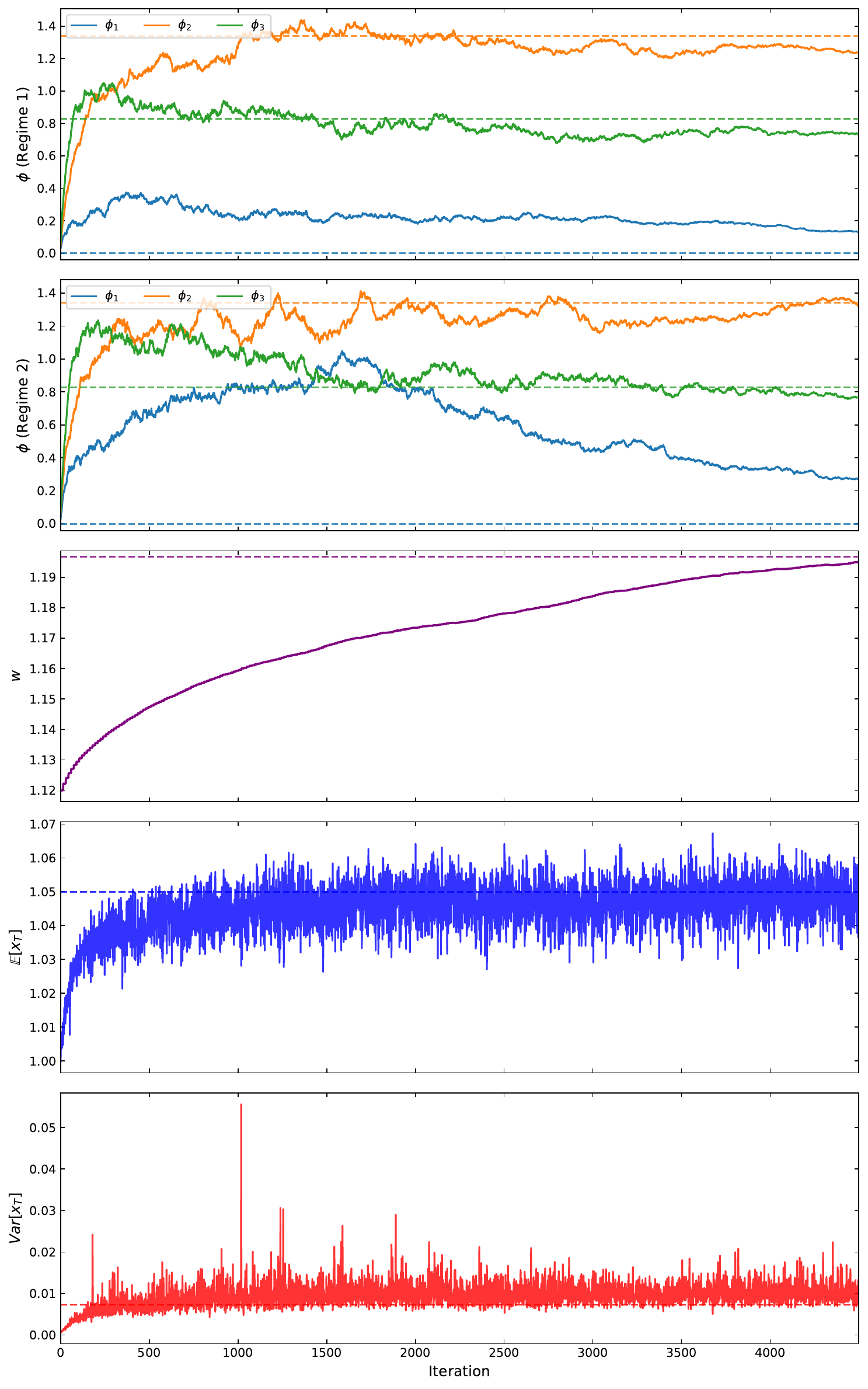}
        \caption{Iterations of $\phi, w, \mathbb{E}[x_T]$, and $\operatorname{Var}[x_T]$ in Algorithm \ref{algo_ba2c_modified} for the target expected return $z=1.05$. The first two panels show the three components of $\phi$ in regime 0 and regime 1, respectively. Horizontal dashed lines indicate the corresponding reference values.}
        \label{fig:target_return_z1.05}
    \end{figure}

    \begin{figure}[htbp]
        \centering
        \includegraphics[width=\textwidth, height=0.92\textheight, keepaspectratio]{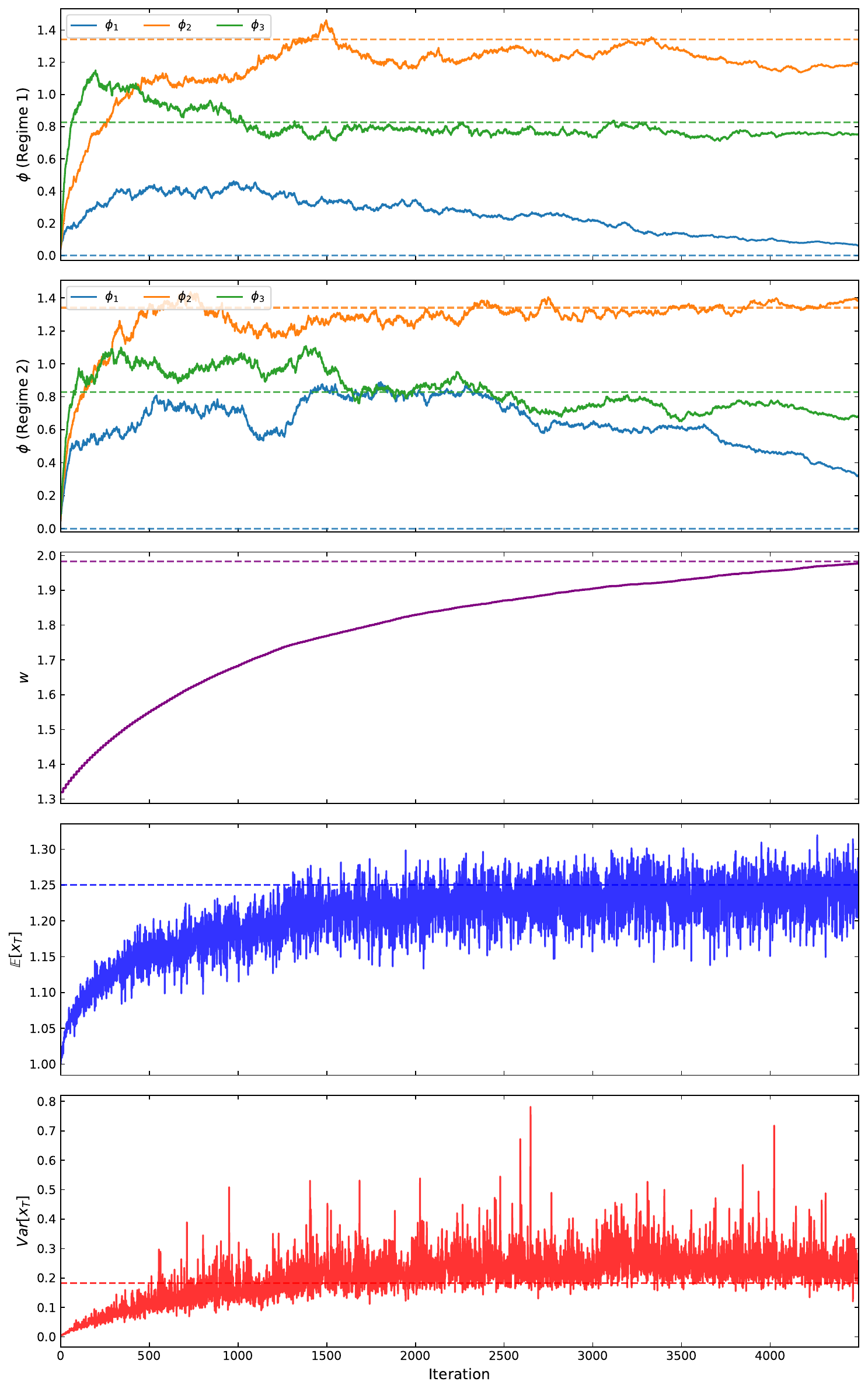}
        \caption{Iterations of $\phi, w, \mathbb{E}[x_T]$, and $\operatorname{Var}[x_T]$ in Algorithm \ref{algo_ba2c_modified} for the target expected return $z=1.25$. The first two panels show the three components of $\phi$ in regime 0 and regime 1, respectively. Horizontal dashed lines indicate the corresponding reference values.}
        \label{fig:target_return_z1.25}
    \end{figure}

    \begin{figure}[htbp]
        \centering
        \includegraphics[width=\textwidth, height=0.92\textheight, keepaspectratio]{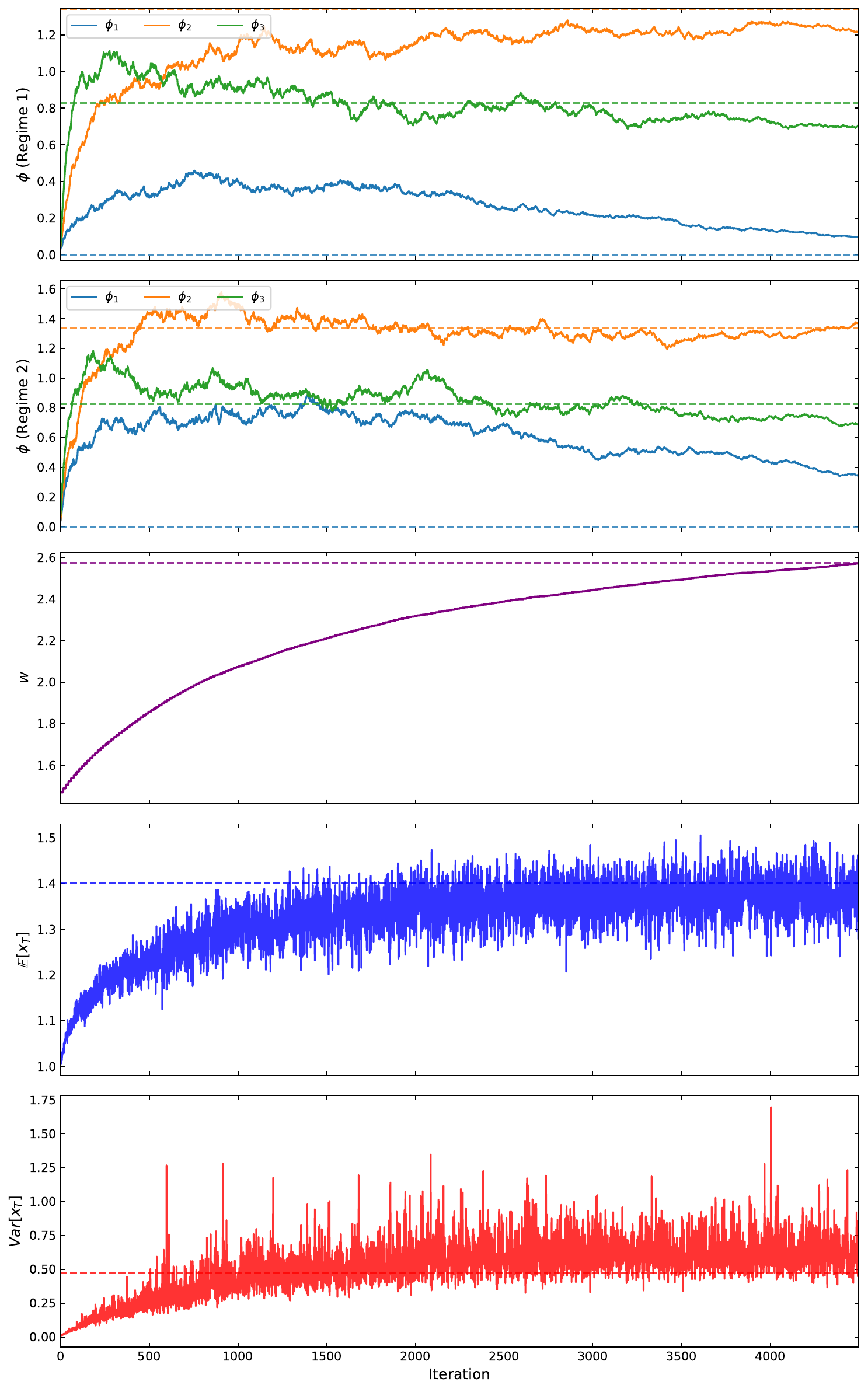}
        \caption{Iterations of $\phi, w, \mathbb{E}[x_T]$, and $\operatorname{Var}[x_T]$ in Algorithm \ref{algo_ba2c_modified} for the target expected return $z=1.40$. The first two panels show the three components of $\phi$ in regime 0 and regime 1, respectively. Horizontal dashed lines indicate the corresponding reference values.}
        \label{fig:target_return_z1.40}
    \end{figure}

    \begin{figure}[htbp]
        \centering
         \includegraphics[width=\textwidth, height=0.92\textheight, keepaspectratio]{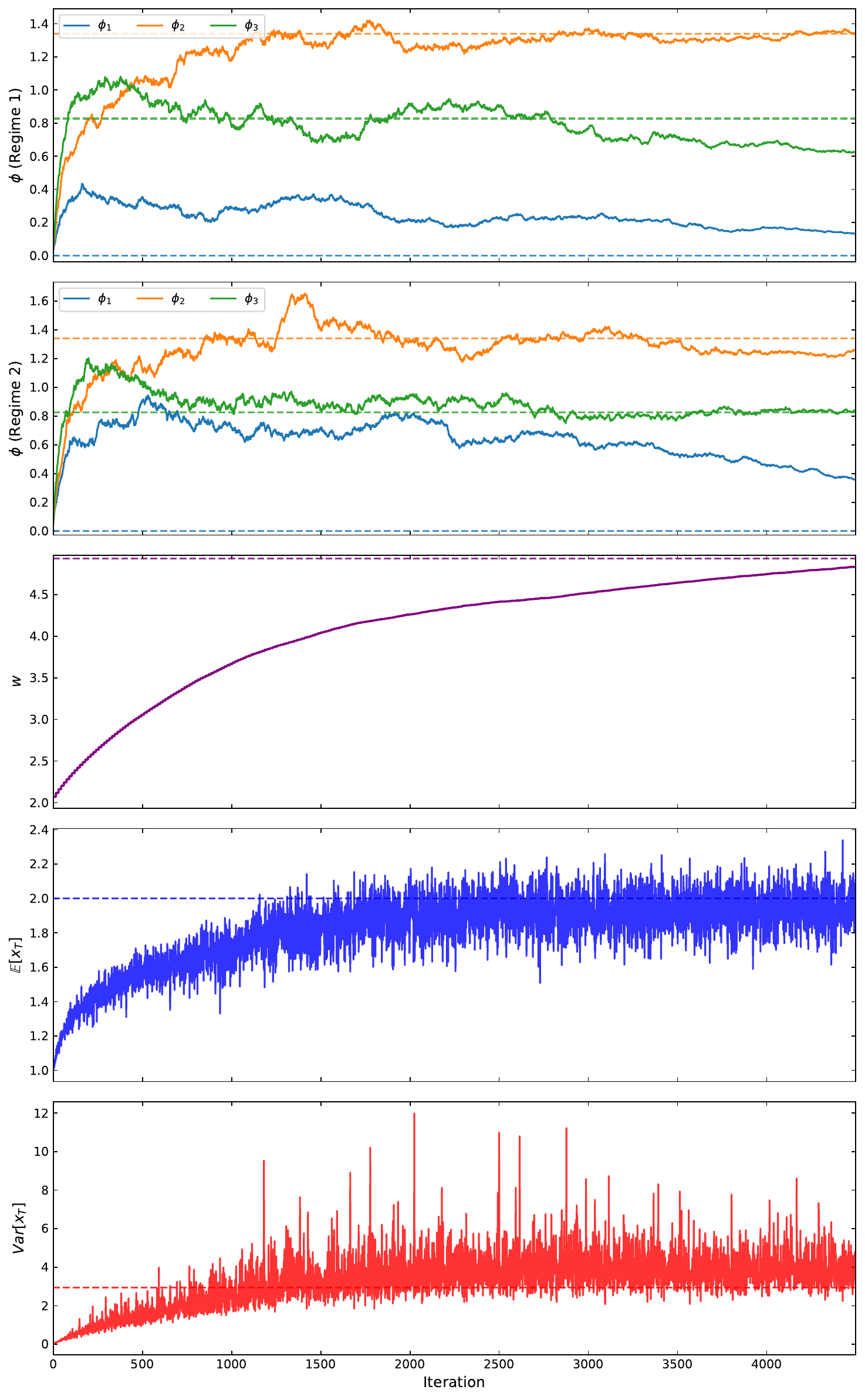}
        \caption{Iterations of $\phi, w, \mathbb{E}[x_T]$, and $\operatorname{Var}[x_T]$ in Algorithm \ref{algo_ba2c_modified} for the target expected return $z=2.00$. The first two panels show the three components of $\phi$ in regime 0 and regime 1, respectively. Horizontal dashed lines indicate the corresponding reference values.}
        \label{fig:target_return_z2.00}
    \end{figure}

\section{Conclusions.}\label{sec6}

    This paper studies the continuous-time mean–variance portfolio selection problem with no-shorting constraints in a Markov regime-switching market from a reinforcement learning perspective. The main result is that strict portfolio constraints need not be enforced on the support of the exploratory policy in order to recover the optimal constrained control. By imposing the no-shorting constraint only on the mean of the Gaussian policy, we retain the analytical tractability of Gaussian exploration while preserving that the mean of the optimal policy ultimately implemented in practice satisfies the no-shorting requirement. More importantly, with an appropriately chosen exploration variance, the mean of the optimal policy of the auxiliary problem coincides exactly with the optimal control of the target problem. This exact recovery result provides the theoretical link between tractable Gaussian exploration and the underlying constrained control problem. 

    Based on this theoretical basis, we develop a model-free continuous-time reinforcement learning algorithm. By integrating martingale-based policy evaluation, policy gradient optimization, and stochastic approximation for the Lagrange multiplier, our method avoids directly solving the piecewise coupled HJB system during learning. 
    The numerical results support the theoretical analysis and show that the proposed method can effectively recover the constrained optimal portfolio in the regime-switching market.

	Several promising directions for future research remain open. One natural extension is to move beyond the no-shorting constraint to more general closed convex cone constraints and portfolio frictions such as transaction costs. A second direction is to establish global convergence and finite-sample guarantees for the proposed multi-timescale Actor–Critic scheme in continuous-time constrained settings. Finally, extending the proposed auxiliary exploratory approach from the current mean–variance framework to more general constrained stochastic LQ control problems may provide a useful theoretical foundation for the development of continuous-time reinforcement learning methods with control constraints.

	\bigskip

\newpage
\bibliography{references}

\end{document}